\newtheorem{thm}{Theorem}[section]
\newtheorem*{theorem*}{Theorem}
\newtheorem*{acknowledgement*}{Acknowledgement}
\newtheorem{cor}[thm]{Corollary}
\newtheorem{lem}[thm]{Lemma}
\newtheorem{prop}[thm]{Proposition}
\theoremstyle{definition}
\theoremstyle{remark}
\newtheorem{rem}[thm]{Remark}
\newtheorem{conj}[thm]{Conjecture}
\numberwithin{equation}{section}
\newcommand{\set}[1]{\left\{#1\right\}}
\newcommand{\Real}{\mathbb R}
\newcommand{\dist}[0]{\mathrm{dist}}
\title[Isoperimetric property of renormalized area]{A Sharp Isoperimetric Property of the Renormalized Area of a Minimal Surface in Hyperbolic Space}
\author{Jacob Bernstein}
\address{Department of Mathematics, Johns Hopkins University, 3400 N. Charles Street, Baltimore, MD 21218}
\email{bernstein@math.jhu.edu}
\thanks{The author was partially supported by the NSF Grant  DMS-1609340 and DMS-1904674. }
\begin{document}

\begin{abstract}
  We prove an inequality  bounding the renormalized area of a complete minimal surface in hyperbolic space in terms of the conformal length of its ideal boundary. 
\end{abstract}
\maketitle

\section{Introduction}
Consider a two-dimensional minimal surface in $\mathbb{H}^n$, $n\geq 3$, with $C^m$-regular asymptotic boundary, $m\geq 2$, on the ideal boundary, $\partial_\infty \mathbb{H}^n$, of hyperbolic space.  Following Graham and Witten \cite{RobinGraham1999} and Alexakis and Mazzeo \cite{Alexakis2010, Alexakis}, for any fixed point $p_0\in \mathbb{H}^n$, such a surface has an asymptotic expansion for its area of the form
\begin{align}\label{VolExpEqn}
	Vol_{\mathbb{H}^{n}}(\Sigma\cap \bar{B}_{R}^{\mathbb{H}^n}(p_0))=c_0 \cosh R +c_{1} +o(1), R\to \infty.
\end{align} 
See Appendix \ref{GrahawWittenApp} for how this expansion relates to the one appearing in \cite{RobinGraham1999}.   This expansion is a (Riemannian) analog of the entropy considered by Ryu and Takayanagi \cite{Ryu2006, Ryu2006a}.
 Importantly, the coefficient $c_{1}=\mathcal{A}(\Sigma)$ is independent of the point $p_0$. Indeed, this quantity is the renormalized area considered by Alexakis and Mazzeo in \cite{Alexakis2010} who show
\begin{equation}\label{AMFormulaEqn}
\mathcal{A}(\Sigma)=-2\pi \chi(\Sigma)-\frac{1}{2}\int_{\Sigma} |\mathbf{A}^{\mathbb{H}^n}_\Sigma|^2 dVol_{\Sigma}.
\end{equation}
Here $\chi(\Sigma)$ is the Euler characteristic of the surface and $\mathbf{A}^{\mathbb{H}^n}_{\Sigma}$ is the (normal bundle valued) second fundamental form of $\Sigma$ -- see Lemma \ref{AMFormualLem}.
 One also has
$$
c_0=Vol_{\partial_{\infty}\mathbb{H}^{n}}(\partial_\infty \Sigma,p_0)
$$
which is the boundary length of the asymptotic boundary inside the ideal boundary of the appropriate (depending on $p_0$) compactification of $\mathbb{H}^n$ and depends both on $\Sigma$ and $p_0$-- see Section \ref{BackgroundSec} for details.  Following \cite{bernsteinColdingMinicozziEntropy2020}, define
$$
\lambda_c[\partial_\infty \Sigma]=\sup_{p_0\in \mathbb{H}^n} Vol_{\partial_{\infty}\mathbb{H}^{n}}(\partial_\infty \Sigma,p_0)
$$
which is, essentially, the $(n-1)$-conformal volume in the sense of Li-Yau \cite{Li1982} of the embedding of the boundary curve $\partial_\infty \Sigma$ into the ideal boundary of $\mathbb{H}^n$ (see also Gromov's visual volume \cite[Section 8.2]{gromovFillingRiemannianManifolds1983}) and, unlike $c_0$, is manifestly independent of the point $p_0$.

Using ideas of Choe and Gulliver \cite{Choe1992}, we bound the renormalized area of such a minimal surface by the conformal length of its ideal boundary:
\begin{thm}\label{MainThm}
Let $\Sigma $ be  a two-dimensional minimal surface in $\mathbb{H}^n$, $n\geq 3$, with a $C^2$-regular asymptotic boundary, then 
$$
-2\pi\geq - \lambda_c[\partial_\infty\Sigma]\geq \mathcal{A}(\Sigma) 
$$
with equality throughout if and only if $\Sigma$ is a totally geodesic $\mathbb{H}^2\subset \mathbb{H}^n$.

Moreover, if $\mathcal{A}(\Sigma)= - \lambda_c[\partial_\infty\Sigma]$, then $\Sigma$ is a totally geodesic $\mathbb{H}^2$.  In particular, either $\Sigma$ is a totally geodesic $\mathbb{H}^2$ or
$$
-2\pi>-\lambda_c[\partial_\infty \Sigma]> \mathcal{A}(\Sigma).
$$
\end{thm}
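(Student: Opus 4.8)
The plan is to establish the two inequalities separately, with essentially all of the geometric content concentrated in the upper bound $\mathcal{A}(\Sigma)\le -\lambda_c[\partial_\infty\Sigma]$, and then to read off both equality statements from the rigidity in the comparison argument.

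First I would reduce the upper bound to a pointwise statement in $p_0$. Since the expansion \eqref{VolExpEqn} gives $\mathcal{A}(\Sigma)=\lim_{R\to\infty}\bigl(A(R)-c_0(p_0)\cosh R\bigr)$ with $A(R)=\mathrm{Vol}_{\mathbb{H}^n}(\Sigma\cap\bar B_R(p_0))$, and since $\mathcal{A}(\Sigma)$ is independent of $p_0$ while $\lambda_c[\partial_\infty\Sigma]=\sup_{p_0}c_0(p_0)$, it suffices to prove $\mathcal{A}(\Sigma)\le -c_0(p_0)$ for each fixed $p_0$ and then take the infimum over $p_0$. So fix $p_0$, write $r=\dist(\cdot,p_0)$, $\Sigma_R=\Sigma\cap\bar B_R(p_0)$, and let $L(R)$ be the length of $\partial\Sigma_R=\Sigma\cap\partial B_R(p_0)$. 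Minimality yields $\Delta_\Sigma\cosh r=2\cosh r$, and hence $\Delta_\Sigma r=\coth r\,(2-|\nabla^\Sigma r|^2)$.

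The heart of the matter is a Choe--Gulliver type comparison produced by the radial field $X=\tanh(r/2)\,\nabla^\Sigma r$. A direct computation from the two identities above gives
\[
\Div_\Sigma X = 1+\frac{\cosh r-1}{\cosh r+1}\bigl(1-|\nabla^\Sigma r|^2\bigr)\ge 1,
\]
since $|\nabla^\Sigma r|\le 1$. Applying the divergence theorem on $\Sigma_R$ and using $\langle\nabla^\Sigma r,\nu\rangle=|\nabla^\Sigma r|\le 1$ on $\partial\Sigma_R$ gives the cone estimate $A(R)\le \tanh(R/2)\int_{\partial\Sigma_R}|\nabla^\Sigma r|\le \tanh(R/2)\,L(R)\le \tanh(R/2)\,A'(R)$, the last step by the coarea formula $A'(R)=\int_{\partial\Sigma_R}|\nabla^\Sigma r|^{-1}\ge L(R)$. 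To pass to the limit I would avoid differentiating the expansion: since $g(R):=A(R)-c_0\cosh R\to\mathcal{A}(\Sigma)$, the mean value theorem supplies $R_k\to\infty$ with $g'(R_k)=A'(R_k)-c_0\sinh R_k\to 0$. Rewriting the cone estimate as $A(R)-c_0(\cosh R-1)\le\tanh(R/2)\bigl(A'(R)-c_0\sinh R\bigr)$ and evaluating along $R_k$ yields $\mathcal{A}(\Sigma)+c_0\le 0$. The lower bound $\lambda_c[\partial_\infty\Sigma]\ge 2\pi$ is independent and follows from Li--Yau conformal balancing: after a conformal change (i.e. a choice of $p_0$) the curve $\partial_\infty\Sigma$ is balanced in the round $S^{n-1}=\partial_\infty\mathbb{H}^n$, and the Wirtinger inequality applied to the unit-speed, sphere-valued coordinate functions forces each component to have length at least $2\pi$.

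For the equality statements I would track the monotone defect $E(R)=\int_{\Sigma_R}\frac{\cosh r-1}{\cosh r+1}(1-|\nabla^\Sigma r|^2)$, which satisfies $E(R_k)\le \tanh(R_k/2)A'(R_k)-A(R_k)\to -(\mathcal{A}(\Sigma)+c_0)$. Thus $\mathcal{A}(\Sigma)=-c_0(p_0)$ forces $E\equiv 0$, hence $|\nabla^\Sigma r|\equiv 1$; this means $\Sigma$ is ruled by geodesics through $p_0$, i.e. a minimal cone smooth at its vertex, which must be a totally geodesic $\mathbb{H}^2$. This is the ``Moreover'' clause. Equality $\lambda_c=2\pi$ forces, via the equality case of Wirtinger, that $\partial_\infty\Sigma$ is a single great circle, and one then argues that $\Sigma$ coincides with the totally geodesic plane it bounds; combining the two rigidities yields the remaining assertions. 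The main obstacle I anticipate is exactly this passage to the limit in the cone estimate -- controlling $L(R)$, equivalently $A'(R)$, against the leading coefficient $c_0\sinh R$ -- which the mean value argument handles cleanly, together with the final rigidity identifying a great-circle ideal boundary with the totally geodesic plane.
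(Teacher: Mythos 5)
Your proof of the two inequalities is correct, and it is a genuinely streamlined variant of the paper's argument. You use the same Choe--Gulliver radial field $X=\tanh(r/2)\nabla^\Sigma r=\frac{\cosh r-1}{\sinh r}\nabla^\Sigma r$ that the paper uses in Lemma \ref{AreaCompLem}, but you apply the divergence theorem directly on $\Sigma_R$ rather than comparing with the cone; the resulting estimate $A(R)\le\tanh(R/2)L(R)$ is algebraically equivalent to the paper's isoperimetric inequality $L_R^2\ge \frac{2L_RA_R}{\sinh R}+A_R^2$ (Proposition \ref{AreaLengthEst}), as one sees by solving the quadratic in $A_R$. Your mean-value trick — choosing radii $R_k$ with $g'(R_k)=A'(R_k)-c_0\sinh R_k\to 0$ from the mere convergence of $g(R)=A(R)-c_0\cosh R$ — is a nice way to avoid the paper's refined boundary expansions (Propositions \ref{ComputationLem} and \ref{LengthAreaAsympProp}), which are needed there to expand $L_R$ to second order. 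The lower bound via M\"obius balancing plus Wirtinger is the standard proof of the Li--Yau bound that the paper imports from Bryant via Lemma \ref{BoundaryConfVolRigidityLem}, and your cone-rigidity mechanism ($E\equiv 0\Rightarrow|\nabla^\Sigma r|\equiv 1\Rightarrow$ $\Sigma$ is a minimal cone smooth at its vertex $\Rightarrow$ totally geodesic) is sound.

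However, there is a genuine gap in your proof of the ``Moreover'' clause. Your defect argument requires a point $p_0$ at which $\mathcal{A}(\Sigma)=-c_0(p_0)$, i.e.\ at which the supremum defining $\lambda_c[\partial_\infty\Sigma]=\sup_{p_0}c_0(p_0)$ is \emph{attained}; the hypothesis $\mathcal{A}(\Sigma)=-\lambda_c[\partial_\infty\Sigma]$ only says $\sup_{p_0}c_0(p_0)=-\mathcal{A}(\Sigma)$. Since the M\"obius group is non-compact, attainment is not automatic: a maximizing sequence $p_k$ may escape to $\partial_\infty\mathbb{H}^n$, and when the conformal factors concentrate at a point of the embedded $C^1$ curve $\partial_\infty\Sigma$ the visual lengths $c_0(p_k)$ tend to $2\pi$, not to a value realized by any $p_0$. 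So the sentence ``Thus $\mathcal{A}(\Sigma)=-c_0(p_0)$ forces $E\equiv 0$'' assumes exactly what must be proved. The paper closes this hole with the dichotomy in Lemma \ref{BoundaryConfVolRigidityLem} (Bryant's Corollary 1): either $\lambda_c[\partial_\infty\Sigma]=2\pi$ — in which case your Wirtinger rigidity applies — or $\lambda_c[\partial_\infty\Sigma]>2\pi$ and the supremum is attained at some $p_0$, to which the cone (in the paper, $K_\infty=0$) rigidity can then be applied. Your argument becomes complete once you state and invoke this dichotomy, or prove it by a concentration analysis of maximizing sequences; as written, the case $\lambda_c[\partial_\infty\Sigma]>2\pi$ with non-attained supremum is not excluded, and the ``Moreover'' clause (hence also the final strict-inequality dichotomy) is unproven.
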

\begin{rem}\label{Remarkone}
	When $\Sigma$ is a topological disk, then the absolute bound $ -2\pi\geq \mathcal{A}(\Sigma)$ and corresponding rigidity result follows from \eqref{AMFormulaEqn}. In fact, Alexakis and Mazzeo show   \cite[Section 8]{Alexakis2010} that the renormalized area of $\Sigma$ is the negative of one half of the Willmore energy of a suitably doubling of $\Sigma$.  As the Willmore energy of any closed surface is at least $4\pi$, this fact implies absolute bound in general.  Going further, the resolution of the Willmore conjecture by Marques and Neves \cite{MarquesNeves} means that if $\Sigma$ is not a disk, then $-\pi^2\geq \mathcal{A}(\Sigma)$. As such, the significance of Theorem \ref{MainThm} is the use of the boundary geometry to refine bounds on the renormalized area spectrum -- i.e., \cite[(5.15)]{Alexakis2010}.
\end{rem}

In \cite{bernsteinColdingMinicozziEntropy2020}, the author introduced a notion of entropy for submanifolds of hyperbolic space analogous to the one introduced by Colding and Minicozzi in \cite{Coldinga} for submanifolds of Euclidean space (see also \cite{zhangSuperconvexityHeatKernel2021}).  More precisely, let $H_2(t,p; t_0,p_0)$
be the heat kernel on $\mathbb{H}^2$ with singularity at $p=p_0$ at time $t=t_0$. That is, suppose it is  the unique positive solution to
$$
\left\{\begin{array}{cc} \left( \frac{\partial}{\partial t}-\Delta_{\mathbb{H}^2}\right)H_2=0 & t>0\\
\lim_{t\downarrow t_0} H_2 =\delta_{p_0}.\end{array}\right.
$$
It follows from the symmetries of $\mathbb{H}^n$ that 
$$
H_2(t, p;t_0, p_0)=K_2(t-t_0,\dist_{\mathbb{H}^2}(p,p_0))
$$
where $K_2(t,\rho)$ is a positive function on $(0,\infty)\times (0,\infty)$ and $\dist_{\mathbb{H}^2}(p,p_0)$ is the hyperbolic distance between $p$ and $p_0$. 
For $(t,p)\in (-\infty, t_0)\times \mathbb{H}^{n}$ let 
$$
\Phi_2^{t_0,p_0}(t,p)=K_2(t_0-t,\dist_{\mathbb{H}^{n}}(p,p_0)).
$$
In particular,  $\Phi_2^{t_0,p_0}$ restricts to the backwards heat kernel that becomes singular at $(t_0,p_0)$ on any totally geodesic $\mathbb{H}^2\subset \mathbb{H}^{n}$ that goes through $p_0$. 

In analogy with the Euclidean setting, for any surface $\Sigma\subset \mathbb{H}^{n}$, define the \emph{hyperbolic entropy} of $\Sigma$ to be
$$
\lambda_{\mathbb{H}}[\Sigma]=\sup_{p_0\in \mathbb{H}^{n}, \tau>0} \int_{\Sigma} \Phi_2^{0,p_0}(-\tau,p) dVol_{\Sigma}(p)
$$
This quantity is monotone non-increasing along any mean curvature flow.  Moreover, by \cite[Theroem 1.5]{bernsteinColdingMinicozziEntropy2020} if $\Sigma$ is  a two-dimensional minimal surface in $\mathbb{H}^n$, $n\geq 3$, with a $C^1$-regular asymptotic boundary,
$$
\lambda_c[\partial_\infty \Sigma]=2\pi \lambda_{\mathbb{H}}[\Sigma].
$$
Combining this with Theorem \ref{MainThm} yields:
\begin{cor}
	If $\Sigma$ is a two-dimensional minimal surface in $\mathbb{H}^n$, $n\geq 3$, with a $C^2$-regular asymptotic boundary,  then
	$$
 -2\pi \lambda_{\mathbb{H}}[\Sigma]\geq 	\mathcal{A}(\Sigma).
	$$
\end{cor}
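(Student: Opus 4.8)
The plan is to obtain the stated inequality by simply combining the two ingredients already assembled in the excerpt: the middle inequality of Theorem \ref{MainThm} and the entropy identity $\lambda_c[\partial_\infty\Sigma]=2\pi\lambda_{\mathbb{H}}[\Sigma]$ quoted from \cite[Theorem 1.5]{bernsteinColdingMinicozziEntropy2020}. Before doing so I would check the compatibility of hypotheses: the corollary assumes $\Sigma$ has $C^2$-regular asymptotic boundary, which in particular is $C^1$-regular, so the identity from \cite{bernsteinColdingMinicozziEntropy2020} applies verbatim, as do all the conclusions of Theorem \ref{MainThm}.

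The deduction itself is a one-line substitution. First I would invoke Theorem \ref{MainThm}, extracting from the chain $-2\pi\geq -\lambda_c[\partial_\infty\Sigma]\geq \mathcal{A}(\Sigma)$ only the relevant piece $-\lambda_c[\partial_\infty\Sigma]\geq \mathcal{A}(\Sigma)$. Then I would replace $\lambda_c[\partial_\infty\Sigma]$ on the left-hand side by $2\pi\lambda_{\mathbb{H}}[\Sigma]$ using the identity, arriving at $-2\pi\lambda_{\mathbb{H}}[\Sigma]\geq \mathcal{A}(\Sigma)$, which is exactly the claim. No further estimate is required.

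Consequently there is no genuine obstacle in the corollary per se; the only subtlety is bookkeeping, namely confirming that $\lambda_c$ and $\lambda_{\mathbb{H}}$ are taken with the same normalization of the heat kernel $K_2$ and the same supremum over base points $p_0$ so that the identity transfers cleanly. The substantive work lies entirely upstream in Theorem \ref{MainThm}, where the Choe--Gulliver comparison argument and the Alexakis--Mazzeo formula \eqref{AMFormulaEqn} are deployed; once that theorem is granted, the corollary follows immediately. As a consistency check one can note sharpness: for a totally geodesic $\mathbb{H}^2$ one has $\mathcal{A}(\Sigma)=-2\pi$ and $\lambda_{\mathbb{H}}[\Sigma]=1$, so equality holds, matching the rigidity conclusion of Theorem \ref{MainThm}.
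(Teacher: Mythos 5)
Your argument is exactly the paper's: the corollary is obtained by substituting the identity $\lambda_c[\partial_\infty\Sigma]=2\pi\lambda_{\mathbb{H}}[\Sigma]$ from \cite[Theorem 1.5]{bernsteinColdingMinicozziEntropy2020} into the inequality $-\lambda_c[\partial_\infty\Sigma]\geq\mathcal{A}(\Sigma)$ of Theorem \ref{MainThm}, with the $C^2$ hypothesis trivially implying the $C^1$ regularity needed for the identity. Your sharpness check for the totally geodesic $\mathbb{H}^2$ is also consistent with the normalizations in the paper.
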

\begin{rem}
	As the estimates of this paper do not depend in an essential way on embeddedness or interior regularity we can conclude that if $\Sigma$ is the image of branched minimal immersion that is not smoothly embedded, then
	$$
	-4\pi\geq 	\mathcal{A}(\Sigma).
	$$
	This also follows from the observations made in Remark \ref{Remarkone}.
\end{rem}

Finally, inspired by \cite{BWTopUniq},  the estimate on $\mathcal{A}(\Sigma)$ in terms of $\lambda_{\mathbb{H}} [\Sigma]$ leads one to expect the following refinement of Theorem \ref{MainThm} in $\mathbb{H}^3$:
\begin{conj}
Suppose $\Sigma, \Sigma'$ are two-dimensional minimal surfaces in $\mathbb{H}^3$ with a $C^2$-regular asymptotic boundaries.  If $\partial_\infty \Sigma=\partial_\infty \Sigma'$ and $\Sigma'$ is not a disk, then
$$
-3\pi>-\frac{(2\pi)^{3/2}}{\sqrt{e}}=-2\pi \lambda[\mathbb{S}^1]>\mathcal{A}(\Sigma).
$$
Here $\lambda[\mathbb{S}^1]\approx 1.52$ is the Colding-Minicozzi entropy of the round circle in $\Real^2$.
\end{conj}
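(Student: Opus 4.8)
The plan is to deduce the statement from the entropy bound $\mathcal{A}(\Sigma)\leq -2\pi\lambda_{\mathbb{H}}[\Sigma]$ of the Corollary, combined with a \emph{sharp lower bound} on the entropy that is forced by the topology of $\Sigma'$. First note that the substantive content is the middle inequality $\mathcal{A}(\Sigma)<-2\pi\lambda[\mathbb{S}^1]$; the outer inequality $-3\pi>-2\pi\lambda[\mathbb{S}^1]$ is the numerical fact $\tfrac32<\sqrt{2\pi/e}\approx 1.52$. Next, since the relation $\lambda_c[\partial_\infty\Sigma]=2\pi\lambda_{\mathbb{H}}[\Sigma]$ shows that $\lambda_{\mathbb{H}}$ depends only on the asymptotic boundary, the hypothesis $\partial_\infty\Sigma=\partial_\infty\Sigma'$ gives $\lambda_{\mathbb{H}}[\Sigma]=\lambda_{\mathbb{H}}[\Sigma']$. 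Applying the Corollary to $\Sigma$ (which is legitimate whether or not $\Sigma$ is itself a disk, and is the crucial point, since $\Sigma$ may well have small Willmore energy) yields
\[
\mathcal{A}(\Sigma)\leq -2\pi\lambda_{\mathbb{H}}[\Sigma]=-2\pi\lambda_{\mathbb{H}}[\Sigma'],
\]
so the entire statement reduces to the following claim: \emph{if $\Sigma'$ is a non-disk two-dimensional minimal surface in $\mathbb{H}^3$ with $C^2$-regular asymptotic boundary, then $\lambda_{\mathbb{H}}[\Sigma']>\lambda[\mathbb{S}^1]$.}

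The route I would take to this claim is to pass to the Euclidean self-expander picture, which is exactly the context of \cite{BWTopUniq}. Under the conformal dictionary relating minimal surfaces in $\mathbb{H}^3$ to self-expanders of mean curvature flow in $\mathbb{R}^3$ (the same correspondence underlying $\lambda_c=2\pi\lambda_{\mathbb{H}}$ in \cite{bernsteinColdingMinicozziEntropy2020}), the surface $\Sigma'$ corresponds to a self-expander $E'\subset\mathbb{R}^3$ asymptotic to the cone over $\partial_\infty\Sigma'\subset\mathbb{S}^2=\partial_\infty\mathbb{H}^3$; the hyperbolic entropy $\lambda_{\mathbb{H}}[\Sigma']$ is identified with the Colding-Minicozzi entropy of $E'$, and, crucially, the topological type is preserved, so $E'$ is not a disk. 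The mechanism behind the bound is that in $\mathbb{R}^3$ the only model for a topologically nontrivial singularity of the flow is the cylinder $\mathbb{S}^1\times\mathbb{R}$, whose entropy is exactly $\lambda[\mathbb{S}^1]$; a surface in $\mathbb{R}^3$ whose entropy lies below this threshold is therefore topologically simple. Applying the topological-uniqueness/entropy-gap theorem of \cite{BWTopUniq} to the non-disk expander $E'$ then gives $\lambda[E']\geq\lambda[\mathbb{S}^1]$, and the claim follows once this is upgraded to a strict inequality.

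The main obstacle is precisely this strictness, together with the regularity bookkeeping. For strictness I would argue by rigidity: if $\lambda[E']=\lambda[\mathbb{S}^1]$, then the configuration achieving the threshold would have to be modeled on a static cylinder, which is incompatible with $E'$ being a genuine self-expander asymptotic to a cone (equivalently, with $\Sigma'$ being a smooth complete minimal surface that is not totally geodesic), forcing $\lambda[E']>\lambda[\mathbb{S}^1]$. The second technical point is that \cite{BWTopUniq} is stated for expanders asymptotic to sufficiently regular cones, whereas here the cone is singular at its vertex and only $C^2$ away from it, so I would need either an approximation argument or a direct verification that the entropy-gap estimate persists under the $C^2$ hypothesis on $\partial_\infty\Sigma'$. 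Granting the claim, the displayed chain gives $\mathcal{A}(\Sigma)\leq -2\pi\lambda_{\mathbb{H}}[\Sigma']<-2\pi\lambda[\mathbb{S}^1]$, which with the numerical comparison completes the proof.
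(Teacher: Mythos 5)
You should first note what the paper actually does with this statement: it is stated as a \emph{conjecture}, offered with the remark that it is ``inspired by'' \cite{BWTopUniq}, and the paper contains no proof of it. So your proposal has to be judged as an attempted resolution of an open problem, not against an existing argument. Your reduction is correct and is clearly the intended one: since $\lambda_c[\partial_\infty\Sigma]=2\pi\lambda_{\mathbb{H}}[\Sigma]$ holds for any minimal surface with regular asymptotic boundary, $\lambda_{\mathbb{H}}$ depends only on the ideal boundary, so the Corollary gives $\mathcal{A}(\Sigma)\leq -2\pi\lambda_{\mathbb{H}}[\Sigma']$, and the whole conjecture reduces (together with the correct numerical check $\tfrac32<\sqrt{2\pi/e}=\lambda[\mathbb{S}^1]$) to the claim that a non-disk minimal surface $\Sigma'\subset\mathbb{H}^3$ satisfies $\lambda_{\mathbb{H}}[\Sigma']>\lambda[\mathbb{S}^1]$. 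But that reduced claim \emph{is} the open content of the conjecture, and your argument for it has a genuine gap.

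The gap is the ``conformal dictionary'' you invoke: no such correspondence between minimal surfaces in $\mathbb{H}^3$ and self-expanders in $\mathbb{R}^3$ exists. Self-expanders are minimal hypersurfaces for Ilmanen's metric $e^{|\mathbf{x}|^2/2}g_{\mathbb{R}^3}$, a complete conformal metric on all of $\mathbb{R}^3$, whereas $\Sigma'$ is minimal for $4(1-|\mathbf{x}|^2)^{-2}g_{\mathbb{R}^3}$ on the unit ball; these underlying conformal structures are not equivalent, and no ambient transformation carries one equation to the other. Accordingly, the identity $\lambda_c=2\pi\lambda_{\mathbb{H}}$ in \cite{bernsteinColdingMinicozziEntropy2020} is proved by direct heat-kernel and monotonicity computations, not via any expander picture -- so there is no expander $E'$ with the same topology and entropy to which the theorem of \cite{BWTopUniq} could be applied. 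Even granting some analogy, the results of \cite{BWTopUniq} concern self-expanders asymptotic to a fixed cone of small (Euclidean, Colding--Minicozzi) entropy and give topological uniqueness among such expanders; what your argument needs is the hyperbolic analog ``$\lambda_{\mathbb{H}}\leq\lambda[\mathbb{S}^1]$ forces disk topology,'' which would require rebuilding the underlying machinery (classification of low-entropy singularity models, topological control along mean curvature flow in $\mathbb{H}^3$, and a comparison between $\lambda_{\mathbb{H}}$ and the Gaussian densities of Euclidean blow-ups) -- precisely the work the paper defers by labeling the statement a conjecture. Your rigidity paragraph for strictness inherits the same problem, since it argues about a limiting ``static cylinder'' in a setting where no flow/expander framework has been established. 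In short: the first displayed reduction is sound and worth keeping, but everything after ``pass to the Euclidean self-expander picture'' is a heuristic program rather than a proof.
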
 
\begin{rem}
  As observed in Remark \ref{Remarkone} the proof of the Willmore conjecture implies $-2\pi \lambda[\mathbb{S}^1]>-\pi^2>\mathcal{A}(\Sigma')$.  Hence, the significance of the conjecture is that there is an improved bound for the surface $\Sigma$ of unspecified topology.
\end{rem}
\section{Background}\label{BackgroundSec}
We use the Poincar\'{e} ball model of hyperbolic space $\mathbb{H}^n$ in order to study the asymptotic properties of minimal surfaces in $\mathbb{H}^n$.  In particular, for any point $p_0\in \mathbb{H}^n$ one obtains a corresponding compactification of hyperbolic space.  The natural geometry on the ideal boundary in this compactification is one that is invariant under M\"obius transformations.

Recall, the Poincar\'{e} ball model of hyperbolic space, $\mathbb{H}^n$ is the open unit ball in Euclidean space 
$$
\mathbb{B}^n=B_1=\set{\mathbf{x}:|\mathbf{x}|<1}\subset \Real^{n}
$$
together with the Poincar\'{e} metric
$$
g_P=4\frac{d\mathbf{x}\otimes d\mathbf{x}}{(1-|\mathbf{x}|^2)^2}=\frac{4}{(1-|\mathbf{x}|^2)^2} g_{E}.
$$
Here $g_E$ is the Euclidean metric on $\mathbb{B}^n$.
That is, for any model of hyperbolic space, $(\mathbb{H}^n,g_{\mathbb{H}^n})$, there is an isometry $i: \mathbb{H}^n\to \mathbb{B}^n$ so $i^* g_{P}=g_{\mathbb{H}^n}$. 
The isometries of $g_P$ are given by the M\"{o}bius transforms of $\mathbb{B}^n$ and so this identification is not unique.  In fact, for any point $p_0\in \mathbb{H}^n$, there is an isometry $i:  \mathbb{H}^n\to \mathbb{B}^n$ so $i(p_0)=\mathbf{0}$.  Moreover, if $i, j:  \mathbb{H}^n\to \mathbb{B}^n$ satisfy $i(p_0)=j(p_0)=\mathbf{0}$, then $i\circ j^{-1}$ is an orthogonal transformation of $\mathbb{B}^n$.  In particular, in this case $i^*g_{\Real^n}=j^*g_{\Real^n}$ while these metrics are different for identifications associated to distinct distinguished points.  

In the remainder of this article, we will always choose a distinguished point $p_0\in \mathbb{H}^n$ and an identification (i.e., an isometry) $i: \mathbb{H}^n\to \mathbb{B}^n$ with $i(p_0)=\mathbf{0}$. 
We use this identification to compactify $\mathbb{H}^n$ and denote the \emph{ideal boundary} of $\mathbb{H}^n$ by $\partial_\infty \mathbb{H}^n$ which is identified with $\mathbb{S}^{n-1}=\partial \mathbb{B}^n$ by extending $i:\bar{\mathbb{H}}^n\to \bar{\mathbb{B}}^n$ in the obvious way.  This compactification is independent, as a manifold with boundary, of the choice of $p_0$ and $i$.

A complete submanifold $\Sigma\subset \mathbb{H}^n$ has \emph{$C^m$-regular asymptotic boundary} for $1\leq m\leq\infty$ if $\Sigma'=\overline{i(\Sigma)} \subset \bar{\mathbb{B}}^n$ is a $C^m$-regular manifold with boundary, $\partial {\Sigma}'\subset \mathbb{S}^{n-1}=\partial \mathbb{B}^n$ that meets $\mathbb{S}^{n-1}$ orthogonally. Denote by $\partial_\infty \Sigma$ the submanifold corresponding to $\partial {\Sigma}'$ in $\partial_\infty \mathbb{H}^n$.   As M\"{o}bius transformations are smooth and conformal this is a well defined notion independent of choice of identification.

Using the identification, $i$, $\partial_{\infty} \mathbb{H}^n$ has a well defined Riemannian metric induced from $\mathbb{S}^{n-1}=\partial \mathbb{B}^n$.  While this metric depends on $p_0$, it is otherwise independent of the choice of isometry taking $p_0$ to $\mathbf{0}$. Let us denote this metric by $g_{\partial_\infty \mathbb{H}^n}^{p_0}$. 
  Clearly, $g_{\partial_\infty \mathbb{H}^{n}}^{p_0}$ and $g_{\partial_\infty \mathbb{H}^{n}}^{q_0}$ are conformal for different choices of distinguished point $p_0$ and $q_0$ and so $\partial_\infty \mathbb{H}^n$ has a well defined conformal structure. In fact, the two metrics are related by a M\"{o}bius transform on the sphere.   Fix a $l$-dimensional $C^m$ submanifold $\Gamma\subset \partial_\infty \mathbb{H}^n$ and let $i(\Gamma)\subset \mathbb{S}^{n-1}$ be the corresponding submanifold of the sphere under the identification.  Set
$$
Vol_{\partial_\infty \mathbb{H}^n}(\Gamma,p_0)=Vol_{\mathbb{S}^{n-1}}(i(\Gamma))=Vol_{\Real^n}(i(\Gamma)).
$$
If $q_0$ is a different choice of distinguished point, then,  there is a M\"{o}bius transform, $\psi\in \mathrm{Mob}(\mathbb{S}^{n-1})$ so that
$$
Vol_{\partial_\infty \mathbb{H}^n}(\Gamma,q_0)=Vol_{\mathbb{S}^{n-1}}(\psi(i(\Gamma)))
$$
Hence, the \emph{conformal volume}  of $\Gamma\subset \partial_\infty \mathbb{H}^n$ defined by
$$
\lambda_{c}[\Gamma]=\sup_{\psi\in \mathrm{Mob}(\mathbb{S}^{n-1})} Vol_{\mathbb{S}^{n-1}}(\psi(i(\Gamma))),
$$
is well defined independent of the choice distinguished point and of identification.  

In fact, the quantity $\lambda_c[\Gamma]$ is essentially the $n$-conformal volume of the embedding defined by Li-Yau \cite{Li1982}.   Moreover, as the M\"{o}bius transformations of $\mathbb{S}^{n-1}$ are parameterized by $\mathbf{a}\in \mathbb{B}^n$ in an explicit way one has
$$
\lambda_{c}[\Gamma]=\sup_{\mathbf{a}\in \mathbb{B}^n} \int_{\Gamma} \frac{(1-|\mathbf{a}|^2)^{l/2}}{(1-\mathbf{a}\cdot \mathbf{x}(p))^l} dVol_{\Gamma}(p)
$$
 In particular, as shown in  \cite{bryantSurfacesConformalGeometry1988}, this readily leads to the following elementary properties:
\begin{lem}\label{BoundaryConfVolRigidityLem}
	Let $\Sigma\subset \mathbb{H}^{n}$ be a $C^1$-asymptotically regular $l$-dimensional minimal submanifold.  One has
	$$
	\lambda_{c}[\partial_\infty \Sigma]\geq Vol_{\mathbb{R}^n} (\mathbb{S}^{l-1})
	$$
	with equality if and only if $\Sigma$ is a totally geodesic copy of $\mathbb{H}^l$. Moreover, if 
	$$
	\lambda_{c}[\partial_\infty \Sigma]> Vol_{\mathbb{R}^n} (\mathbb{S}^{l-1}),
	$$ 
	then there is a $p_0\in \mathbb{H}^{n}$
	so that
	$$
	\lambda_{c}[\partial_\infty \Sigma]=Vol_{\partial_\infty \mathbb{H}^{n}}(\partial_\infty \Sigma, p_0).
	$$
\end{lem}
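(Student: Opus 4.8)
The plan is to work directly with the explicit formula for the conformal volume and to reduce the whole statement to the boundary behaviour of a single function on the ball. Set $k=l-1$ and write $\Gamma=\partial_\infty\Sigma\subset\mathbb{S}^{n-1}$, a closed $C^1$ submanifold of dimension $k$. For $\mathbf{a}\in\mathbb{B}^n$ let
\[
V(\mathbf{a})=\int_\Gamma \frac{(1-|\mathbf{a}|^2)^{k/2}}{(1-\mathbf{a}\cdot\mathbf{x}(p))^{k}}\,dVol_\Gamma(p)=Vol_{\mathbb{S}^{n-1}}(\psi_{\mathbf{a}}(\Gamma)),
\]
the volume of the image of $\Gamma$ under the M\"obius transformation $\psi_{\mathbf{a}}$ parameterized by $\mathbf{a}$, so that $\lambda_c[\Gamma]=\sup_{\mathbf{a}\in\mathbb{B}^n}V(\mathbf{a})$. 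The function $V$ is continuous (indeed smooth) on the open ball, since $1-\mathbf{a}\cdot\mathbf{x}\geq 1-|\mathbf{a}|>0$ there and $\Gamma$ is compact. I would emphasize that only the $C^1$-regularity of $\Gamma$ is used for the inequality and the attainment statement; these are general facts about closed $C^1$ submanifolds of the sphere, and the minimality of $\Sigma$ enters only in the rigidity.

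The heart of the argument is the boundary asymptotics of $V$. First I would show that for each $\mathbf{x}_0\in\Gamma$ the radial limit is $\lim_{t\uparrow 1}V(t\mathbf{x}_0)=Vol_{\Real^n}(\mathbb{S}^{k})$, and that more generally $\limsup_{\mathbf{a}\to\partial\mathbb{B}^n}V(\mathbf{a})\leq Vol_{\Real^n}(\mathbb{S}^{k})$. The mechanism is a blow-up: as $\mathbf{a}\to\mathbf{x}_0$ the map $\psi_{\mathbf{a}}$ dilates a shrinking neighbourhood of $\mathbf{x}_0$ until it fills almost all of $\mathbb{S}^{n-1}$, while the complementary part of $\Gamma$ is contracted toward $-\mathbf{x}_0$ and, because its conformal factor tends to $0$ uniformly away from $\mathbf{x}_0$, contributes nothing in the limit. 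Using the $C^1$ graph representation of $\Gamma$ over its tangent plane $T_{\mathbf{x}_0}\Gamma$, the rescaled images $\psi_{\mathbf{a}}(\Gamma)$ converge, together with their volume, to the image of $T_{\mathbf{x}_0}\Gamma$ under a limiting M\"obius map; since M\"obius maps send $k$-planes to round $k$-spheres, this image is a round $k$-sphere in $\mathbb{S}^{n-1}$, which is great exactly for the radial approach. As any round $k$-sphere in $\mathbb{S}^{n-1}$ has volume at most $Vol_{\Real^n}(\mathbb{S}^{k})$, with equality only when great, both claims follow. If instead $\mathbf{a}\to\mathbf{x}_0\notin\Gamma$, no mass of $\Gamma$ lies in the dilated region and $V(\mathbf{a})\to 0$.

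Granting these asymptotics, the first two assertions are immediate. The radial limit yields $\lambda_c[\Gamma]=\sup V\geq Vol_{\Real^n}(\mathbb{S}^{k})$. For attainment, take a maximizing sequence $\mathbf{a}_j$; by compactness of $\bar{\mathbb{B}}^n$ it subconverges to some $\mathbf{a}^\ast$. If $\lambda_c[\Gamma]>Vol_{\Real^n}(\mathbb{S}^{k})$, the $\limsup$ bound rules out $\mathbf{a}^\ast\in\partial\mathbb{B}^n$, so $\mathbf{a}^\ast$ is interior and $V(\mathbf{a}^\ast)=\lambda_c[\Gamma]$ by continuity of $V$ on $\mathbb{B}^n$; the distinguished point $p_0$ corresponding to $\psi_{\mathbf{a}^\ast}$ then realizes the supremum, proving the \emph{Moreover} clause.

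For the equality case I would invoke the rigidity in the conformal volume inequality of Li--Yau \cite{Li1982} and Bryant \cite{bryantSurfacesConformalGeometry1988}: the equality $\lambda_c[\Gamma]=Vol_{\Real^n}(\mathbb{S}^{k})$ forces $\Gamma$ to be a great $k$-sphere, that is, the intersection of $\mathbb{S}^{n-1}$ with an $l$-dimensional linear subspace. Such a great sphere is the ideal boundary of a totally geodesic $\mathbb{H}^l\subset\mathbb{H}^n$, and minimality of $\Sigma$ upgrades this to $\Sigma$ being that totally geodesic $\mathbb{H}^l$, via uniqueness of the minimal submanifold asymptotic to a great sphere (maximum principle against the foliation of $\mathbb{H}^n$ by totally geodesic copies of $\mathbb{H}^l$). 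The converse is immediate, since a totally geodesic $\mathbb{H}^l$ has a great sphere as ideal boundary, for which $V\equiv Vol_{\Real^n}(\mathbb{S}^{k})$. The main obstacle is the blow-up of the second paragraph: one must show that as $\mathbf{a}\to\partial\mathbb{B}^n$ no volume is created or lost, so that $\lim V(\mathbf{a})$ is exactly the volume of the limiting round sphere and the clean bound $\limsup V\leq Vol_{\Real^n}(\mathbb{S}^{k})$ holds. This requires the $C^1$-regularity of the asymptotic boundary, so that blow-ups of $\Gamma$ are genuine planes, together with a uniform estimate on the concentration of the conformal factor and a dominated-convergence control of the tails; the uniqueness input in the rigidity step is a secondary difficulty.
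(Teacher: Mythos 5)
Your proposal is correct in substance, but it relates to the paper's proof in an unusual way: the paper disposes of the entire lemma by citation, applying Proposition 1 of \cite{bryantSurfacesConformalGeometry1988} (for the inequality and the boundary rigidity) and Corollary 1 of that paper (for attainment of the supremum at an interior point when the inequality is strict), whereas you reconstruct the content of those two results from scratch via the Li--Yau blow-up argument: concentration of the M\"obius maps $\psi_{\mathbf{a}}$ near a boundary point, convergence of the images of $\Gamma$ to round $k$-spheres (great exactly for the radial approach, since $\Gamma$ is embedded and $C^1$, hence has density one), the resulting bound $\limsup_{\mathbf{a}\to\partial\mathbb{B}^n}V(\mathbf{a})\leq Vol_{\mathbb{R}^n}(\mathbb{S}^{l-1})$, and compactness of $\bar{\mathbb{B}}^n$ to locate an interior maximizer. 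This is precisely the mechanism underlying Bryant's results, so mathematically the two routes coincide; what your version buys is self-containedness and a clear accounting of where $C^1$-regularity and embeddedness enter, at the cost of the analytic details (no mass creation or loss in the blow-up, tail control) that you correctly flag as the main work and that the citation encapsulates. You also correctly isolate minimality as entering only in the rigidity step, which matches the paper.

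Two small corrections, neither a genuine gap. First, in the converse direction your claim that $V\equiv Vol_{\mathbb{R}^n}(\mathbb{S}^{l-1})$ when $\Gamma$ is a great sphere is false: M\"obius images of a great $k$-sphere are round but generally non-great $k$-spheres, so $V(\mathbf{a})<Vol_{\mathbb{R}^n}(\mathbb{S}^{l-1})$ for typical $\mathbf{a}\neq\mathbf{0}$. What is true, and suffices, is $\sup V=Vol_{\mathbb{R}^n}(\mathbb{S}^{l-1})$, attained at $\mathbf{a}=\mathbf{0}$; this follows from your own observation that round spheres have volume at most that of great ones. Second, in the rigidity step, a ``maximum principle against the foliation of $\mathbb{H}^n$ by totally geodesic copies of $\mathbb{H}^l$'' is not the right tool when $l<n-1$, since the leaves are then not hypersurfaces and cannot serve as barriers; the clean argument in arbitrary codimension is the convex hull property: a minimal $\Sigma$ lies in every closed half-space whose ideal closure contains $\partial_\infty\Sigma$, and when $\partial_\infty\Sigma$ is a great $\mathbb{S}^{l-1}$ the intersection of all such half-spaces is exactly the totally geodesic $\mathbb{H}^l$, forcing $\Sigma=\mathbb{H}^l$ by dimension and completeness. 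This also fills in the step the paper itself passes over as ``immediate.''
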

\begin{proof}
	As $\partial_\infty \Sigma$ is a closed submanifold of $\mathbb{S}^{n-1}$, the  first claim is an immediate consequence of applying \cite[Proposition 1]{bryantSurfacesConformalGeometry1988} to the embedding $\phi:\partial_\infty \Sigma \to \partial_\infty \mathbb{H}^{n}$. That result also gives that equality holds only when $\partial_\infty \Sigma$ is a totally geodesic copy of $\mathbb{S}^{l-1}$ in $\mathbb{S}^{n-1}$.  The rigidity of $\Sigma$ in case of equality follows immediately from this. The final claim is, likewise, an immediate consequence of \cite[Corollary 1]{bryantSurfacesConformalGeometry1988}. 
\end{proof}

\section{Asymptotic expansion of length and area}

We record here certain computations involving geometric quantities near the boundary of the compactification, $\Sigma'\subset \bar{\mathbb{B}}^n$, of an asymptotically regular minimal surface $\Sigma$ in $\mathbb{H}^n$.

\begin{prop}\label{ComputationLem}
	
	Let $\Sigma $ be  a two-dimensional minimal surface in $\mathbb{H}^n$, $n\geq 3$, with a $C^2$-regular asymptotic boundary. 
	Fix a point $p_0\in \mathbb{H}^n$ and let $\Sigma'$ be the compactification in $\bar{\mathbb{B}}^n$ corresponding to $\Sigma$ and $p_0$.
	
	One has, 
\begin{equation}\label{Claim1Eqn}
	\frac{1}{4}(1-|\mathbf{x}|^2)\mathbf{H}_{\Sigma'}^{g_E}=\mathbf{x}^\perp.
\end{equation}	
	As a consequence, for  $p\in  \partial \Sigma'\subset \partial \mathbb{B}^n$,
\begin{equation}\label{Claim2Eqn}
	\mathbf{x}^\perp(p)=\mathbf{0}
\end{equation}
and
\begin{equation}\label{Claim3Eqn}
	\mathbf{A}_{\Sigma'}^{g_E}(\mathbf{x}, \mathbf{x})|_{p}=\mathbf{k}_{\partial \Sigma'}^{\mathbb{S}^{n-1}}(p).
\end{equation}
	Hence,
\begin{equation}\label{Claim4Eqn}
	\begin{aligned}
	Vol_{\Real^n}(\Sigma'\cap \partial B_s)=s Vol_{\Real^n}(\partial \Sigma')-&\frac{(s-1)^2}{2}\int_{\partial \Sigma'}|\mathbf{k}_{\partial \Sigma'}^{\mathbb{S}^{n-1}}|^2  dVol_{\partial \Sigma'} \\
	&+o((s-1)^2), s\to 1
	\end{aligned}
\end{equation}
	and
\begin{equation}\label{Claim5Eqn}
\int_{\Sigma'\cap \partial B_s} \frac{|\mathbf{x}|}{|\mathbf{x}^\top|} dVol_{\Sigma'\cap \partial B_s}=sVol_{\Real^n}(\partial \Sigma')+o((s-1)^2), s\to 1.
\end{equation}	
\end{prop}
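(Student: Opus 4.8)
The plan is to establish the five displayed identities in order, each feeding the next. For \eqref{Claim1Eqn} I would invoke the standard transformation law for the mean curvature vector under a conformal change of metric. Writing $g_P = e^{2\phi} g_E$ with $\phi = \log\frac{2}{1-|\mathbf{x}|^2}$, a two-dimensional submanifold satisfies $\mathbf{H}^{g_P} = e^{-2\phi}\left(\mathbf{H}^{g_E} - 2(\nabla^{g_E}\phi)^\perp\right)$, where $(\cdot)^\perp$ is the (conformally invariant) normal projection onto $\Sigma'$. Since $\Sigma$ is minimal, $\mathbf{H}^{g_P} = \mathbf{0}$, and as $\nabla^{g_E}\phi = \frac{2\mathbf{x}}{1-|\mathbf{x}|^2}$ this rearranges to $\mathbf{H}^{g_E}_{\Sigma'} = \frac{4}{1-|\mathbf{x}|^2}\mathbf{x}^\perp$, which is \eqref{Claim1Eqn}. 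Identity \eqref{Claim2Eqn} is then immediate: since $\Sigma'$ is $C^2$ up to $\partial B_1$, the vector $\mathbf{H}^{g_E}_{\Sigma'}$ extends continuously (hence stays bounded) to $\partial\Sigma'$, so $\mathbf{x}^\perp = \frac{1}{4}(1-|\mathbf{x}|^2)\mathbf{H}^{g_E}_{\Sigma'} \to \mathbf{0}$ as $|\mathbf{x}|\to1$.

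The heart of the matter is \eqref{Claim3Eqn}. Fix $p\in\partial\Sigma'$ and let $T$ be a $g_E$-unit tangent to $\partial\Sigma'$ at $p$; by \eqref{Claim2Eqn} the radial vector $\mathbf{x}(p)$ is tangent to $\Sigma'$, so $\{\mathbf{x}(p),T\}$ is a $g_E$-orthonormal basis of $T_p\Sigma'$ and $\mathbf{H}^{g_E}_{\Sigma'}(p) = \mathbf{A}^{g_E}_{\Sigma'}(\mathbf{x},\mathbf{x}) + \mathbf{A}^{g_E}_{\Sigma'}(T,T)$. I would evaluate the two diagonal terms separately. For the first, using that the position field obeys $\nabla^E_V\mathbf{x} = V$ together with the Gauss--Weingarten equations and \eqref{Claim1Eqn} in the form $\mathbf{x}^\perp = u\,\mathbf{H}^{g_E}_{\Sigma'}$ with $u = \frac14(1-|\mathbf{x}|^2)$, one finds $\mathbf{A}^{g_E}_{\Sigma'}(\mathbf{x},\mathbf{x})|_p = -\nabla^\perp_{\mathbf{x}^\top}\mathbf{x}^\perp|_p = -(\mathbf{x}^\top u)(p)\,\mathbf{H}^{g_E}_{\Sigma'}(p)$, where $u(p)=0$ kills the remaining term; since $\nabla^E u = -\tfrac12\mathbf{x}$ gives $(\mathbf{x}^\top u)(p) = -\tfrac12|\mathbf{x}^\top(p)|^2 = -\tfrac12$, this yields $\mathbf{A}^{g_E}_{\Sigma'}(\mathbf{x},\mathbf{x})|_p = \tfrac12\mathbf{H}^{g_E}_{\Sigma'}(p)$. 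For the second term, decomposing the Euclidean acceleration of $\partial\Sigma'$ both relative to $\mathbb{S}^{n-1}$ (where the sphere's second fundamental form gives $\mathbf{A}^{\mathbb{S}^{n-1}}(T,T) = -\mathbf{x}$) and relative to $\Sigma'$, and noting that $\mathbf{k}^{\mathbb{S}^{n-1}}_{\partial\Sigma'}\perp T,\mathbf{x}$ is normal to $\Sigma'$, gives $\mathbf{A}^{g_E}_{\Sigma'}(T,T)|_p = \mathbf{k}^{\mathbb{S}^{n-1}}_{\partial\Sigma'}(p)$. Combining the three relations forces $\mathbf{A}^{g_E}_{\Sigma'}(\mathbf{x},\mathbf{x}) = \mathbf{A}^{g_E}_{\Sigma'}(T,T) = \mathbf{k}^{\mathbb{S}^{n-1}}_{\partial\Sigma'}$, which is \eqref{Claim3Eqn}.

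For \eqref{Claim4Eqn} I would parametrize a collar of $\partial\Sigma'$ in $\Sigma'$ by $F(\sigma,t)$, where $\sigma$ is Euclidean arclength along $\gamma=\partial\Sigma'$ and, e.g.\ via the $\Sigma'$-geodesic collar, $F(\sigma,0)=\gamma(\sigma)$, $\partial_t F(\sigma,0) = -\mathbf{x}$ and $\partial_t^2 F(\sigma,0) = \mathbf{A}^{g_E}_{\Sigma'}(\mathbf{x},\mathbf{x}) = \mathbf{k}^{\mathbb{S}^{n-1}}_{\partial\Sigma'}$ by \eqref{Claim3Eqn}. Since $\mathbf{k}^{\mathbb{S}^{n-1}}_{\partial\Sigma'}\perp\mathbf{x}$, a Taylor expansion gives $|\mathbf{x}\circ F| = 1 - t + O(t^3)$, so the level set $\Sigma'\cap\partial B_s$ is $\{t = (1-s) + O((1-s)^3)\}$, while the induced length element expands as $|\partial_\sigma F| = 1 - t - \tfrac12 t^2|\mathbf{k}^{\mathbb{S}^{n-1}}_{\partial\Sigma'}|^2 + O(t^3)$, the quadratic coefficient coming from $\langle T, \partial_\sigma \mathbf{k}^{\mathbb{S}^{n-1}}_{\partial\Sigma'}\rangle = -|\mathbf{k}^{\mathbb{S}^{n-1}}_{\partial\Sigma'}|^2$ (itself a consequence of $\mathbf{k}\perp T$ and the acceleration decomposition above). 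Setting $t = 1-s$ and integrating in $\sigma$ over $\gamma$ yields \eqref{Claim4Eqn}. Finally, \eqref{Claim5Eqn} follows by combining the earlier identities: on $\Sigma'\cap\partial B_s$ one has $|\mathbf{x}|=s$ and, by \eqref{Claim1Eqn}, $|\mathbf{x}^\perp| = \tfrac14(1-s^2)|\mathbf{H}^{g_E}_{\Sigma'}|$, so $\frac{|\mathbf{x}|}{|\mathbf{x}^\top|} = (1 - |\mathbf{x}^\perp|^2/s^2)^{-1/2} = 1 + \tfrac{(1-s^2)^2}{32 s^2}|\mathbf{H}^{g_E}_{\Sigma'}|^2 + O((1-s)^4)$; since \eqref{Claim3Eqn} gives $|\mathbf{H}^{g_E}_{\Sigma'}(p)|^2 = 4|\mathbf{k}^{\mathbb{S}^{n-1}}_{\partial\Sigma'}|^2$ on the boundary, integrating and inserting \eqref{Claim4Eqn} shows that the resulting $+\tfrac{(s-1)^2}{2}\int|\mathbf{k}^{\mathbb{S}^{n-1}}_{\partial\Sigma'}|^2$ correction exactly cancels the $-\tfrac{(s-1)^2}{2}\int|\mathbf{k}^{\mathbb{S}^{n-1}}_{\partial\Sigma'}|^2$ term, leaving $s\,Vol_{\mathbb{R}^n}(\partial\Sigma') + o((s-1)^2)$.

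The main obstacle is \eqref{Claim3Eqn}: extracting a finite, geometrically meaningful value for $\mathbf{A}^{g_E}_{\Sigma'}(\mathbf{x},\mathbf{x})$ at the boundary, where both factors in the representation $\mathbf{x}^\perp = u\,\mathbf{H}^{g_E}_{\Sigma'}$ of \eqref{Claim1Eqn} degenerate ($u\to0$ while $\mathbf{H}^{g_E}_{\Sigma'}$ need not vanish). Pinning down the precise constants here --- ultimately the identity $|\mathbf{H}^{g_E}_{\Sigma'}(p)|^2 = 4|\mathbf{k}^{\mathbb{S}^{n-1}}_{\partial\Sigma'}|^2$ --- is exactly what makes the cancellation in \eqref{Claim5Eqn} work, and I would want to double-check every sign and normalization (the orientation of $\mathbf{k}^{\mathbb{S}^{n-1}}_{\partial\Sigma'}$, the sphere's second fundamental form, and the conformal factor) with care, since these are the most error-prone points in the argument.
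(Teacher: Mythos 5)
Your proposal is correct, and for \eqref{Claim1Eqn}--\eqref{Claim3Eqn} it coincides with the paper's own proof: the same conformal transformation law, the same continuity argument for \eqref{Claim2Eqn}, and for \eqref{Claim3Eqn} the same two ingredients, namely the identity $\nabla^{\perp,\Sigma'}_{\mathbf{v}}\mathbf{x}^\perp=-\mathbf{A}^{g_E}_{\Sigma'}(\mathbf{x}^\top,\mathbf{v})$ forcing $\mathbf{A}^{g_E}_{\Sigma'}(\mathbf{x},\mathbf{x})=\tfrac12\mathbf{H}^{g_E}_{\Sigma'}$ on $\partial\Sigma'$, followed by the trace decomposition and the identification $\mathbf{A}^{g_E}_{\Sigma'}(\mathbf{T},\mathbf{T})=\mathbf{k}^{\mathbb{S}^{n-1}}_{\partial\Sigma'}$. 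One caution there: the paper obtains your product-rule step as the limit $\mathbf{x}^\perp/(1-|\mathbf{x}|)\to \mathbf{A}^{g_E}_{\Sigma'}(\mathbf{x},\mathbf{x})$, i.e.\ by difference quotients and continuity of $\mathbf{H}^{g_E}_{\Sigma'}$; that is also how your claim that ``$u(p)=0$ kills the remaining term'' should be justified, since at $C^2$ regularity $\nabla^\perp\mathbf{H}^{g_E}_{\Sigma'}$ need not be bounded, so the naive product rule is not available. Where your implementation genuinely differs is \eqref{Claim4Eqn}--\eqref{Claim5Eqn}: the paper flows $\partial\Sigma'$ inward by the explicit $C^1$ field $\mathbf{V}=-|\mathbf{x}|\mathbf{x}^\top/|\mathbf{x}^\top|^2$, chosen precisely so that $\phi_t(\partial\Sigma')\subset \partial B_{1-t}$ exactly, and reads both expansions off the first and second variation formulas for length; you Taylor-expand a collar with the same $2$-jet and, for \eqref{Claim5Eqn}, expand the integrand pointwise via $|\mathbf{x}^\perp|=\tfrac14(1-|\mathbf{x}|^2)|\mathbf{H}^{g_E}_{\Sigma'}|$ and $|\mathbf{H}^{g_E}_{\Sigma'}|=2|\mathbf{k}^{\mathbb{S}^{n-1}}_{\partial\Sigma'}|$ on the boundary. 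Your route makes the cancellation in \eqref{Claim5Eqn} more transparent than the paper's computation of $\tfrac{d^2}{dt^2}\left(|\mathbf{x}|/|\mathbf{x}^\top|\right)$ along the flow, but it incurs bookkeeping that the paper's choice of $\mathbf{V}$ is designed to avoid: (i) on a surface whose induced metric is only $C^1$ the geodesic collar is not known to exist or be unique, so replace it by the flow of a $C^1$ field such as $\mathbf{V}$, which has the required $2$-jet; (ii) your collar tracks $\partial B_s$ only to second order, so $\Sigma'\cap\partial B_s$ is a graph $t=\tau(\sigma)$ over $\{t=1-s\}$ and you must check that the graph corrections (the $\partial_\sigma\tau$ and $\langle \partial_\sigma F,\partial_t F\rangle$ terms) enter only at order $(1-s)^3$, which they do by the orthogonality relations at $t=0$; (iii) the coefficient $\langle T,\partial_\sigma \mathbf{k}^{\mathbb{S}^{n-1}}_{\partial\Sigma'}\rangle$ involves a derivative the $C^2$ boundary need not have pointwise, so your length-element expansion is valid only in integrated form, after the integration by parts in $\sigma$ you allude to (which is exactly what the second variation formula encodes), and all $O(t^3)$ remainders should be weakened to $o(t^2)$. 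These are patchable technicalities rather than gaps in the idea; the substance of your argument and the paper's is the same.
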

\begin{proof}
	By construction, $\Sigma'$ is minimal with respect to $g_P$ and is $C^2$ up to $\partial \Sigma'$. As
	$$
	g_P=\frac{4}{(1-|\mathbf{x}|^2)^2} g_E,
	$$
	the formula for the transformation mean curvature vector under conformal change of metric implies that, on $\Sigma'\cap \mathbb{B}^n$,
\begin{equation*}\label{ConfHEqn}
	\mathbf{H}^{g_P}_{\Sigma'}=\frac{(1-|\mathbf{x}|^2)^2}{4}\left( \mathbf{H}_{\Sigma'}^{g_E}-\frac{4 \mathbf{x}^\perp}{1-|\mathbf{x}|^2}\right).
\end{equation*}
	As $\Sigma'$ is minimal with respect to $g_P$ this yields \eqref{Claim1Eqn}.
As $\Sigma'$ is $C^2$, it follows that the left hand side of \eqref{Claim1Eqn} vanishes on $\partial \Sigma'$.  This means $\mathbf{x}^\perp|_{\partial \Sigma'}=0$,  proving \eqref{Claim2Eqn}.
Observe that the normal connection to $\Sigma'$ satisfies, for any $\mathbf{v}$ tangent to $\Sigma'$,
$$
\nabla^{\perp, \Sigma'}_{ \mathbf{v}}\mathbf{x}^\perp= -\mathbf{A}_{\Sigma'}^{g_E} (\mathbf{x}^\top, \mathbf{v}).
$$
It follows that for any $p\in \partial \Sigma'$
\begin{equation}\label{Limitxperpeqn}
\lim_{\substack{q\in \Sigma'\setminus \partial \Sigma'\\ q\to p}}\frac{\mathbf{x}^\perp(q)}{1-|\mathbf{x}(q)|}= \mathbf{A}_{\Sigma'}^{g_E} (\mathbf{x}, \mathbf{x})|_{p}.
\end{equation}
This together with \eqref{Claim1Eqn} and the continuity of the mean curvature implies that along $\partial \Sigma'$ 
$$
\frac{1}{2}\mathbf{H}^{g_E}_{\Sigma'}=\mathbf{A}_{\Sigma'}^{g_E} (\mathbf{x}, \mathbf{x}).
$$
Clearly, along $\partial \Sigma'$,
$$
\mathbf{H}^{g_E}_{\Sigma'}=\mathbf{A}_{\Sigma'}^{g_E} (\mathbf{x}, \mathbf{x})+\mathbf{A}_{\Sigma'}^{g_E} (\mathbf{T} , \mathbf{T}),
$$
where $\mathbf{T}$ is a choice of tangent vector along $\partial \Sigma'$ with $\vert \mathbf{T}\vert_{g_E}=1$.  Taken together, this implies that on $\partial \Sigma'$
$$
\mathbf{A}_{\Sigma'}^{g_E} (\mathbf{T} , \mathbf{T})=\mathbf{A}_{\Sigma'}^{g_E} (\mathbf{x}, \mathbf{x}).
$$
Finally, as $\mathbf{x}^\perp=\mathbf{0}$ on $\partial \Sigma'$ one has
$$
\mathbf{k}_{\partial\Sigma'}^{\mathbb{S}^{n-1}}=\mathbf{A}_{\Sigma'}^{g_E} (\mathbf{T} , \mathbf{T})=\mathbf{A}_{\Sigma'}^{g_E} (\mathbf{x}, \mathbf{x}).
$$
This proves \eqref{Claim3Eqn}.

For $t\geq 0$, let
$\phi_t:\Sigma'\to \Sigma'$
be the flow of a $C^1$ vector field $\mathbf{V}$ that satisfies
$$
\mathbf{V}=-\frac{|\mathbf{x}|\mathbf{x}^\top}{|\mathbf{x}^\top|^2}.
$$
near $\partial \Sigma'$.  One verifies that $\mathbf{V}$ has been chosen so that, near $\partial \Sigma'$, $\nabla_{\mathbf{V}}^{g_E} |\mathbf{x}|=-1$ and so, for $t$ sufficiently small, $\phi_t(\partial \Sigma')\subset \partial B_{1-t}$.
Clearly, \eqref{Claim2Eqn} implies that for $p\in \partial \Sigma'$,
$$
\frac{d}{dt}|_{t=0} \mathbf{x}(\phi_t(p))=-\mathbf{x}(p).
$$
One further concludes from \eqref{Claim3Eqn} that
\begin{align*}
\frac{d^2}{dt^2}|_{t=0}\mathbf{x}(\phi_t(p)) & =\frac{d}{dt}|_{t=0}  \mathbf{V}(\phi_t(p))=\nabla_{-\mathbf{x}}^{g_E} \mathbf{V}|_{p} \\
&=\mathbf{A}_{\Sigma'}^{g_E}(\mathbf{x}, \mathbf{x})|_p=\mathbf{k}_{\partial \Sigma}^{\mathbb{S}^{n-1}}(p).
\end{align*}

The fact that $\mathbf{x}^\perp=\mathbf{0}$ on $\partial \Sigma'$ further implies that, along $\partial \Sigma'$,
$$
\mathbf{k}_{\partial \Sigma'}=\mathbf{k}_{\partial\Sigma'}^{\mathbb{S}^{n-1}}-\mathbf{x}.
$$
Hence, using the first variation formula,
\begin{align*}
\frac{d}{ds}|_{s=1} Vol_{\Real^n}(\Sigma'\cap \partial B_s)&=-\frac{d}{dt}|_{t=0} Vol_{\Real^n} (\phi_t(\partial \Sigma')\\
&=
\int_{\partial \Sigma'} \mathbf{k}_{\partial \Sigma'}\cdot (-\mathbf{x})\; dVol_{\partial\Sigma'}=Vol_{\Real^n}(\partial\Sigma').
\end{align*}
Likewise, the second variation formula for length (see for instance \cite[Equation (9.4)]{Simon1983})
yields
\begin{align*}
	\frac{d^2}{ds^2}|_{s=1} &Vol_{\Real^n}(\Sigma'\cap \partial B_{s})=\frac{d^2}{dt^2}|_{t=0} Vol_{\Real^n}(\Sigma'\cap \partial B_{1-t})=\frac{d^2}{dt^2}|_{t=0} Vol_{\Real^n}(\phi_t(\partial \Sigma'))\\
	&=-\int_{\partial \Sigma'} \mathbf{k}_{\partial \Sigma'}\cdot \mathbf{k}_{\partial \Sigma}^{\mathbb{S}^{n-1}} dVol_{\partial \Sigma'} +\int_{\partial \Sigma'} (-\mathbf{k}_{\partial \Sigma'}\cdot -\mathbf{x})^2-(-\mathbf{x}\cdot\mathbf{k}_{\partial \Sigma'})^2 dVol_{\partial \Sigma'}\\
	&= -\int_{\partial \Sigma'}|\mathbf{k}_{\partial \Sigma}^{\mathbb{S}^{n-1}}|^2 dVol_{\partial \Sigma'}.
\end{align*}
Together these calculations prove \eqref{Claim4Eqn}.

Observe that
$$
\frac{|\mathbf{x}|}{|\mathbf{x}^\top|}= \frac{1}{\sqrt{1-\frac{|\mathbf{x}^\perp|^2}{|\mathbf{x}|^2}}}.
$$
As such, at any point in $\Sigma$
\begin{equation*}
\nabla_{\mathbf{x}^\top}^{g_E} \frac{|\mathbf{x}|}{|\mathbf{x}^\top|}= \frac{\frac{\mathbf{x}^\perp}{|\mathbf{x}|^2} \cdot\nabla_{{\mathbf{x}^\top}}^{\perp, \Sigma'} \mathbf{x}^\perp-\frac{|\mathbf{x}^\perp|^2|\mathbf{x}^\top|^2}{|\mathbf{x}|^4}}{\left(1-\frac{|\mathbf{x}^\perp|^2}{|\mathbf{x}|^2}\right)^{3/2}}\\
=-\frac{|\mathbf{x}|\mathbf{x}^\perp \cdot \mathbf{A}_{\Sigma'}^{g_E} (\mathbf{x}^\top, \mathbf{x}^\top)}{|\mathbf{x}^\top|^3}-\frac{|\mathbf{x}^\perp|^2}{|\mathbf{x}| |\mathbf{x}^\top|}.
\end{equation*}
Hence, for $p\in \partial \Sigma'$ and $t\geq 0$ small,
\begin{equation}\label{DerivNormEqn}
	\begin{aligned}
	\frac{d}{dt} \frac{|\mathbf{x}(\phi_t(p))|}{|\mathbf{x}^\top(\phi_t(p))|}&= 
	\left.\nabla_{\mathbf{V}}^{g_E}  \frac{|\mathbf{x}|}{|\mathbf{x}^\top|}\right|_{\phi_t(p)}\\
	&=\left.\left(\frac{|\mathbf{x}|^2\mathbf{x}^\perp \cdot \mathbf{A}_{\Sigma'}^{g_E} (\mathbf{x}^\top, \mathbf{x}^\top)}{|\mathbf{x}^\top|^5}-\frac{|\mathbf{x}^\perp|^2}{ |\mathbf{x}^\top|^3}\right)\right|_{\phi_t(p)}.
	\end{aligned}
\end{equation}
In particular, this vanishes when $t=0$. Combined with the first variation formula for length this implies
$$
	\frac{d}{ds}|_{s=1} \int_{\Sigma'\cap \partial B_s} \frac{|\mathbf{x}|}{|\mathbf{x}^\top|} dVol_{\Sigma'\cap \partial B_s}=Vol_{\Real^n}(\partial \Sigma').
$$
Moreover, for $p\in \partial \Sigma'$ differentiating \eqref{DerivNormEqn} at $t=0$ yields
\begin{align*}
\frac{d^2}{dt^2}|_{t=0} \frac{|\mathbf{x}(\phi_t(p))|}{|\mathbf{x}^\top(\phi_t(p))|}&=\lim_{t\to 0} \frac{1}{t} \left(\frac{|\mathbf{x}|^2\mathbf{x}^\perp \cdot \mathbf{A}_{\Sigma'}^{g_E} (\mathbf{x}^\top, \mathbf{x}^\top)}{|\mathbf{x}^\top|^5}-\frac{|\mathbf{x}^\perp|^2}{ |\mathbf{x}^\top|^3}\right)|_{\phi_t(p)}\\
&=\mathbf{A}_{\Sigma'}^{g_E} (\mathbf{x}^\top, \mathbf{x}^\top)|_{p} \lim_{t\to 0} \frac{\mathbf{x}^\perp(\phi_t(p))}{1-|\mathbf{x}(\phi_t(p)|} = |\mathbf{k}_{\partial \Sigma'}^{\mathbb{S}^{n-1}}|^2(p).
\end{align*}
Where we used \eqref{Claim2Eqn}, \eqref{Limitxperpeqn} and \eqref{Claim3Eqn}.
Combining this with the second variation formula shows 
$$
	\frac{d^2}{ds^2}|_{s=1} \int_{\Sigma'\cap \partial B_s} \frac{|\mathbf{x}|}{|\mathbf{x}^\top|} dVol_{\Sigma'\cap \partial B_s}=0.
$$
The expansion \eqref{Claim5Eqn} follows immediately.
\end{proof}

We also need the following geometric expansions that refine \eqref{VolExpEqn} and \cite[Lemma 4.1]{bernsteinColdingMinicozziEntropy2020} for surfaces with $C^2$-regular asymptotic boundary.  
\begin{prop}\label{LengthAreaAsympProp}
	Let $\Sigma $ be  a two-dimensional minimal surface in $\mathbb{H}^n$, $n\geq 3$, with a $C^2$-regular asymptotic boundary.  For any $p_0\in \mathbb{H}^n$, there are constants $L_\infty,  A_\infty$ and $K_\infty$ so that:

$$
L_R=Vol_{\mathbb{H}^n} (\Sigma\cap \partial B_{R}^{\mathbb{H}^n}(p_0))=L_\infty \sinh R-K_\infty e^{-R} +o(e^{-R}), R\to \infty.
$$
and
$$
A_R=Vol_{\mathbb{H}^n}(\Sigma\cap \bar{B}_R^{\mathbb{H}^n}(p_0))=L_\infty \cosh R +A_\infty+o(e^{-R}), R\to \infty.
$$
Furthermore, $L_\infty=Vol_{\partial_\infty\mathbb{H}^n} (\partial_\infty \Sigma, p_0)$, $A_\infty=\mathcal{A}(\Sigma)$
and
$$
K_\infty=\int_{\partial \Sigma'} |\mathbf{k}_{\partial \Sigma'}^{\mathbb{S}^{n-1}}|^2 dVol_{\partial \Sigma'},
$$
where $\Sigma'\subset \bar{\mathbb{B}}^n$ is the compactification of $\Sigma$ with respect to $p_0$. 
\end{prop}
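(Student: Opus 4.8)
The plan is to push everything back to the Euclidean compactification $\Sigma'\subset\bar{\mathbb{B}}^n$ and feed the expansions \eqref{Claim4Eqn} and \eqref{Claim5Eqn} of Proposition \ref{ComputationLem} through the conformal change of variables relating the Euclidean sphere radius $s$ to the hyperbolic radius $R$. Since $i$ is an isometry carrying $B_R^{\mathbb{H}^n}(p_0)$ to the Euclidean ball $B_s$ centered at $\mathbf{0}$, where $s=\tanh(R/2)$, one has $\Sigma\cap\partial B_R^{\mathbb{H}^n}(p_0)\cong\Sigma'\cap\partial B_s$. On $\partial B_s$ the conformal factor $\lambda=\tfrac{2}{1-|\mathbf{x}|^2}=\tfrac{2}{1-s^2}$ is constant, and a short computation with $s=\tanh(R/2)$ gives the three identities $\tfrac{2}{1-s^2}=1+\cosh R$, $(1+\cosh R)s=\sinh R$, and $(1+\cosh R)(s-1)^2=2e^{-R}$. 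These are exactly what convert the Euclidean $o((s-1)^2)$ remainders into $o(e^{-R})$, since an $o((s-1)^2)$ term equals $(s-1)^2 o(1)$, so multiplying by $(1+\cosh R)$ yields $2e^{-R}\,o(1)=o(e^{-R})$.

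For the length, $L_R$ equals the $g_P$-length of the curve $\Sigma'\cap\partial B_s$, hence $(1+\cosh R)\,Vol_{\Real^n}(\Sigma'\cap\partial B_s)$. Substituting \eqref{Claim4Eqn} together with the three identities above immediately yields $L_R=L_\infty\sinh R-K_\infty e^{-R}+o(e^{-R})$ with $L_\infty=Vol_{\Real^n}(\partial\Sigma')$ and $K_\infty=\int_{\partial\Sigma'}|\mathbf{k}_{\partial\Sigma'}^{\mathbb{S}^{n-1}}|^2\,dVol_{\partial\Sigma'}$; by the definition of $Vol_{\partial_\infty\mathbb{H}^n}(\,\cdot\,,p_0)$ in Section \ref{BackgroundSec} one has $L_\infty=Vol_{\partial_\infty\mathbb{H}^n}(\partial_\infty\Sigma,p_0)$.

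For the area I would use the coarea formula on $\Sigma$ with $\rho=\dist_{\mathbb{H}^n}(\,\cdot\,,p_0)$, giving $\tfrac{d}{dR}A_R=\int_{\Sigma\cap\partial B_R}|\nabla^\Sigma\rho|_{g_P}^{-1}\,dVol$. The key computation is that the $g_P$-gradient of $\rho$ is the radial vector $\tfrac{1-|\mathbf{x}|^2}{2}\tfrac{\mathbf{x}}{|\mathbf{x}|}$, whose tangential projection has $g_P$-norm $|\mathbf{x}^\top|/|\mathbf{x}|$ (the orthogonal projection onto $T\Sigma$ is the same for $g_E$ and $g_P$). Thus $|\nabla^\Sigma\rho|_{g_P}^{-1}=|\mathbf{x}|/|\mathbf{x}^\top|$, the exact integrand in \eqref{Claim5Eqn}; here \eqref{Claim2Eqn} ensures $|\mathbf{x}^\top|\to|\mathbf{x}|\neq0$ near $\partial\Sigma'$, so the integrand stays bounded and nonvanishing. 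Converting to the Euclidean curve with the constant factor $1+\cosh R$ and inserting \eqref{Claim5Eqn} gives $\tfrac{d}{dR}A_R=(1+\cosh R)\bigl[sL_\infty+o((s-1)^2)\bigr]=L_\infty\sinh R+o(e^{-R})$. Integrating from a fixed large radius and using $\int_R^\infty o(e^{-r})\,dr=o(e^{-R})$ produces $A_R=L_\infty\cosh R+A_\infty+o(e^{-R})$ for a constant $A_\infty$.

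To identify $A_\infty=\mathcal{A}(\Sigma)$ I would compare with the known expansion \eqref{VolExpEqn}: since $o(e^{-R})\subset o(1)$, uniqueness of the coefficients in an expansion of the form $c_0\cosh R+c_1+o(1)$ forces $L_\infty=c_0$ and $A_\infty=c_1=\mathcal{A}(\Sigma)$. The main obstacle is not conceptual but bookkeeping: one must carry the $o((s-1)^2)$ remainders correctly through the nonlinear substitution $s=\tanh(R/2)$ and check they land at the sharp order $o(e^{-R})$, and one must justify the coarea/fundamental-theorem-of-calculus step together with the tail estimate in the area integration. The one genuinely geometric input beyond Proposition \ref{ComputationLem} is the gradient identity $|\nabla^\Sigma\rho|_{g_P}^{-1}=|\mathbf{x}|/|\mathbf{x}^\top|$.
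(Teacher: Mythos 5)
Your proposal is correct and follows essentially the same route as the paper: convert to the Euclidean compactification with the conformal factor $1+\cosh R$ on $\partial B_s$, feed in \eqref{Claim4Eqn} and \eqref{Claim5Eqn} via $s=\tanh(R/2)$, use the gradient identity $|\nabla^{\Sigma'}_{g_P}\rho|_{g_P}^{-1}=|\mathbf{x}|/|\mathbf{x}^\top|$ with the co-area formula, and integrate the integrable tail. The only cosmetic difference is that you identify $A_\infty=\mathcal{A}(\Sigma)$ by matching coefficients against \eqref{VolExpEqn} while the paper cites Lemma \ref{AMFormualLem}; these amount to the same thing, and your exact identity $(1+\cosh R)(s-1)^2=2e^{-R}$ is a slightly cleaner bookkeeping device than the paper's $1-s=2e^{-R}+o(e^{-R})$.
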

\begin{proof}
	Pick an identification $i:\mathbb{H}^{n}\to \mathbb{B}^{n}$ with $i(p_0)=\mathbf{0}$.  As $i^{*}g_P=g_{\mathbb{H}^{n}}$, one has
	$i(\partial B_R^{\mathbb{H}^{n}}(p_0))=\partial B_{R}^{g_P}(\mathbf{0})$.  Furthermore, as the conformal factor of $g_P$ is radial, $\partial B_{R}^{g_P}(\mathbf{0})=\partial B_{s}$ where $R=\ln \left( \frac{1+s}{1-s}\right) $, equivalently, $s=\frac{e^R-1}{e^R+1}$.
	
	Set $\Sigma_R=\Sigma\cap \partial B_{R}^{\mathbb{H}^{n}}(p_0)$. 
	and let $\Sigma'$ be the closure of $i(\Sigma)$ be the natural compactification of $\Sigma$ relative to $p_0$.  From the above,   $i(\Sigma_R)=\Sigma'\cap \partial B_s(0)=\Sigma'_s$.  Let $g_R$ be the metric on $\Sigma_R$ induced from $\mathbb{H}^{n}$ and $g_{s}'$ be the metric induced on $\Sigma'_s$ from $g_{\Real^{n}}$.  Clearly,
	$$
	i^*g_{s}'= \left( \frac{e^R-1}{e^R+1}\right)^2\sinh^{-2}(R) g_R=\frac{1}{(1+\cosh R )^2} g_R.
	$$
	In particular, as $\Sigma_s'$ is $1$-dimensional,
	$$
	Vol_{\Real^{n}}(\Sigma_s')= \frac{Vol_{\mathbb{H}^{n}}(\Sigma_R)}{1+\cosh R }.
	$$
	Set
	$$L_\infty=Vol_{\Real^n}(\partial \Sigma')=Vol_{\partial_\infty \mathbb{H}^n}(\partial_\infty \Sigma, p_0).
	$$
	As $\Sigma$ has a $C^2$-regular asymptotic boundary, \eqref{Claim4Eqn} of Proposition \ref{ComputationLem}, ensures
	$$
    Vol_{\Real^{n}}(\Sigma_s')=sL_\infty -\frac{1}{2}(s-1)^2 K_\infty +o((s-1)^2), s\to 1
	$$
	where
	$$
	K_\infty= 		\int_{\partial \Sigma'} |\mathbf{k}_{\partial \Sigma'}^{\mathbb{S}^{n-1}}|^2 dVol_{\partial \Sigma'}.
	$$
	As
	$$
	s=\frac{e^R-1}{e^R+1}=\frac{\sinh R}{1+\cosh R}
	$$
	and
	$$
	1-s=\frac{2}{e^R+1}=2e^{-R}+o(e^{-R}), R\to \infty,
	$$
	we conclude
\begin{align*}
  L_R&=	Vol_{\mathbb{H}^n}(\Sigma\cap \partial{B}_{R}^{\mathbb{H}^n}(p_0))=Vol_{\mathbb{H}^{n}}(\Sigma_R)=(1+\cosh R )Vol_{\Real^n}(\Sigma'_s)\\
 	&=L_\infty \sinh R -K_\infty e^{-R} +o(e^{-R}), R\to \infty.
\end{align*}
This gives the first expansion.	
	

	To work out the area expansions first observe that on $\mathbb{B}^n$ one has
	$$
	(\nabla_{\Sigma'}^{g_P} \rho)(p)= \frac{1-|\mathbf{x}(p)|^2}{2|\mathbf{x}(p)|} \mathbf{x}^\top(p)
	$$
	where $\rho(p)=\dist_{g_P}(p,\mathbf{0})$.
	
	It follows that
	$$
	\frac{1}{|\nabla_{\Sigma'}^{g_P} \rho|_{g_P}(p)}=\frac{|\mathbf{x}(p)|}{|\mathbf{x}^\top(p)|}.
	$$
	Hence, as $\Sigma$ has $C^2$-regular asymptotic boundary, \eqref{Claim5Eqn} of Proposition \ref{ComputationLem} implies
	$$
	\int_{\Sigma_s'} \frac{1}{|\nabla_{\Sigma'}^{g_P} \rho|_{g_P}}dVol_{\Sigma_s'}= L_\infty s+o((s-1)^2),s\to 1.
	$$ 
	Using the identifications from before yields
\begin{align*}
	\int_{\Sigma_R} \frac{1}{|\nabla_{\Sigma} r|} dVol_{\Sigma_R}&= (1+\cosh R) \int_{\Sigma_s'} 	\frac{1}{|\nabla_{\Sigma'}^{g_P} \rho|_{g_P}}dVol_{\Sigma'_s}\\
	&= L_\infty \sinh R+o(e^{-R}), R\to \infty
\end{align*}
where $r(p)=\dist_{\mathbb{H}^n}(p,p_0)$. 
 The co-area formula ensures,
	$$
	\frac{d}{dR} Vol_{\mathbb{H}^n}(\Sigma\cap \bar{B}_R^{\mathbb{H}^n}(p_0))=	\int_{\Sigma_R} \frac{1}{|\nabla_{\Sigma} r|} dVol_{\Sigma_R}.
	$$
and so
	$$
	\frac{d}{dR} Vol_{\mathbb{H}^n}(\Sigma\cap \bar{B}_R^{\mathbb{H}^n}(p_0))= L_\infty \sinh R+o(e^{-R}), R\to \infty.
	$$
	Hence,
	\begin{align*}
	\frac{d}{dR} \left( Vol_{\mathbb{H}^n}(\Sigma\cap \bar{B}_R^{\mathbb{H}^n}(p_0)) -L_\infty \cosh R\right) =o(e^{-R}), R\to \infty.
	\end{align*}
	As the error is integrable, we deduce
	\begin{align*}
	A_R=Vol_{\mathbb{H}^n}&(\Sigma\cap \bar{B}_R^{\mathbb{H}^n}(p_0)) = L_\infty \cosh R+A_\infty+o(e^{-R})
	\end{align*}
	Where 
\begin{align*}
	A_\infty &=	\lim_{R\to \infty}\left( Vol_{\mathbb{H}^n}(\Sigma\cap \bar{B}_R^{\mathbb{H}^n}(p_0)) -L_\infty \cosh R\right)\\
	&=\lim_{R\to \infty}\left( Vol_{\mathbb{H}^n}(\Sigma\cap \bar{B}_R^{\mathbb{H}^n}(p_0)) -Vol_{\mathbb{H}^n}(\Sigma\cap \partial B_R^{\mathbb{H}^n}(p_0))\right).
\end{align*}
	The definition of renormalized area ensures $A_{\infty}=\mathcal{A}(\Sigma)$ is independent of $p_0$ -- see Lemma \ref{AMFormualLem}.
\end{proof}

\section{Proof of the main theorem}

In order to prove the main result we need two auxilliary results inspired by \cite{Choe1992}.
First of all, given a point $p_0\in \mathbb{H}^n$ and a curve $\gamma\subset \partial B_{R}^{\mathbb{H}^n}(p_0)$, define the \emph{cone of $\gamma$ over $p_0$} to be
$$
C(\gamma; p_0)=\set{p\in \mathbb{H}^n: p\in \sigma(p_0,q) \mbox{ for some } q\in \gamma}.
$$
Here $\sigma(p_0,q)$ is the minimzing geodesic segment connecting $p_0$ to $q$.
If $r=\dist_{\mathbb{H}^n}(\cdot, p_0)$, one readily checks that the vector field $\nabla_{\mathbb{H}^n} r$ is tangent to $C=C(\gamma; p_0)\setminus\set{p_0}$,

When $\gamma\subset \partial B_{R}^{\mathbb{H}^n}(p_0)$, one has the following simple formula relating length and area of geodesic balls in $C(\gamma; p_0)$ centered at $p_0$.
\begin{lem}\label{ConeComputationLem}
Suppose $\gamma\subset \partial B_{R}^{\mathbb{H}^n}(p_0)$ is a $C^2$ curve.  For any $0<\rho\leq R$, let
	$$
	 L_\rho^C=Vol_{\mathbb{H}^n}(C(\gamma; p_0)\cap \partial B_{\rho}^{\mathbb{H}^n}(p_0))
	$$
	and
	 $$
	 A_\rho^C=Vol_{\mathbb{H}^n}(C(\gamma; p_0)\cap  \bar{B}_{\rho}^{\mathbb{H}^n}(p_0))
	 $$
	The density at $p_0$ of the cone satisfies, for all $0<\rho\leq R$,
	 $$
	 \Theta= \Theta(C(\gamma; p_0), p_0)=\frac{L_\rho^C}{2\pi \sinh \rho}=\frac{A^C_\rho}{2\pi (\cosh \rho-1)}.
	 $$
	 Moreover, one has
	 $$
	 (L_\rho^C)^2=4\pi \Theta A_\rho^C +(A_\rho^C)^2=\frac{2 L_\rho^C A_\rho^C}{\sinh \rho} +(A_\rho^C)^2.
	 $$
\end{lem}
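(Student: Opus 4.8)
The plan is to exploit geodesic polar coordinates centered at $p_0$, in which the cone has a completely explicit product-type structure. Writing the hyperbolic metric as $g_{\mathbb{H}^n} = d\rho^2 + \sinh^2\rho\, g_{\mathbb{S}^{n-1}}$, where $\rho = \dist_{\mathbb{H}^n}(\cdot, p_0)$ and $g_{\mathbb{S}^{n-1}}$ is the round metric on the unit sphere of directions at $p_0$, I would first record that each point of $\gamma \subset \partial B_R^{\mathbb{H}^n}(p_0)$ determines a unique unit direction $\omega \in \mathbb{S}^{n-1}$ via its minimizing geodesic to $p_0$. Since $\gamma$ is a $C^2$ curve on the geodesic sphere of radius $R$, the resulting set of directions $\hat\gamma \subset \mathbb{S}^{n-1}$ is a $C^2$ curve, and the cone is exactly $C(\gamma;p_0) = \set{\exp_{p_0}(\rho\,\omega) : 0 \leq \rho \leq R,\ \omega \in \hat\gamma}$. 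Distinct directions give geodesic rays meeting only at $p_0$, so this is an embedded surface away from the apex and the set-theoretic volumes below agree with the integrals of the parametrization.

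Next I would compute the induced metric. Parametrizing $\hat\gamma$ by its $\mathbb{S}^{n-1}$-arclength $t$, the map $(\rho,t) \mapsto \exp_{p_0}(\rho\,\omega(t))$ pulls the hyperbolic metric back to $d\rho^2 + \sinh^2\rho\, dt^2$, since the radial field is unit and orthogonal to the spheres while a unit tangent to $\hat\gamma$ is stretched by the factor $\sinh\rho$ at distance $\rho$. Let $\ell = \mathrm{length}_{\mathbb{S}^{n-1}}(\hat\gamma)$. Integrating the induced length and area elements then gives directly
\begin{equation*}
L_\rho^C = \sinh\rho \int_{\hat\gamma} dt = \ell\,\sinh\rho, \qquad A_\rho^C = \int_0^\rho \sinh r\, dr \int_{\hat\gamma} dt = \ell\,(\cosh\rho - 1).
\end{equation*}
Setting $\Theta = \ell/(2\pi)$ immediately yields $\Theta = L_\rho^C/(2\pi\sinh\rho) = A_\rho^C/(2\pi(\cosh\rho-1))$ for every $0 < \rho \leq R$, establishing that the density is well-defined and independent of $\rho$.

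The remaining identity is then pure algebra built on $\cosh^2\rho - 1 = \sinh^2\rho$. With $a = 2\pi\Theta = \ell$ one has $L_\rho^C = a\sinh\rho$ and $A_\rho^C = a(\cosh\rho - 1)$, so
\begin{equation*}
4\pi\Theta\, A_\rho^C + (A_\rho^C)^2 = a^2\big[2(\cosh\rho - 1) + (\cosh\rho - 1)^2\big] = a^2(\cosh\rho - 1)(\cosh\rho + 1) = a^2\sinh^2\rho = (L_\rho^C)^2,
\end{equation*}
and the second form follows by substituting $4\pi\Theta = 2L_\rho^C/\sinh\rho$. The only genuine point requiring care is the geometric setup of the first paragraph --- verifying that the cone is faithfully described by the polar parametrization so that the stated volumes really equal $\ell\sinh\rho$ and $\ell(\cosh\rho - 1)$; everything afterward is the standard coarea/polar-coordinate computation combined with a hyperbolic trigonometric identity. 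Regularity at the apex $p_0$ causes no trouble, since $\rho$ ranges over $(0,R]$ and the area integrand $\sinh r$ vanishes there.
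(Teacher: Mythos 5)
Your proof is correct, and it reaches the paper's two key intermediate formulas --- $L_\rho^C = \ell\sinh\rho$ and $A_\rho^C = \ell(\cosh\rho-1)$, with $\ell$ the length of the link of the cone --- by a genuinely different route. The paper argues variationally: it flows $\gamma$ inward along $\nabla_{\mathbb{H}^n} r$, notes that the curvature vector of a curve in $\partial B_\rho^{\mathbb{H}^n}(p_0)$ has radial component $-\coth\rho$, applies the first variation formula to get the ODE $\frac{d}{dt}\,Vol(\phi_{-t}(\gamma)) = -\coth(R-t)\,Vol(\phi_{-t}(\gamma))$, solves it to see the length scales like $\sinh$, and then integrates via the co-area formula (using that the unit radial field is tangent to the cone) to obtain the area. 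You instead parametrize the cone explicitly in geodesic polar coordinates and read the length and area elements off the warped-product form $d\rho^2 + \sinh^2\rho\,g_{\mathbb{S}^{n-1}}$; this is more elementary and self-contained, needing only the Gauss lemma and Fubini rather than the first/second variation machinery, and it has the added virtue of making the constant $\ell$ manifestly the spherical length of the direction curve $\hat\gamma$ (the paper's ODE solution, as written, is off by a normalization $1/\sinh R$ that your formulation avoids). You also correctly flag and dispatch the one point where set-theoretic and parametrized volumes could differ, by noting distinct rays meet only at the apex. The closing algebra via $\cosh^2\rho - 1 = \sinh^2\rho$ is identical in both proofs. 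One small gap of exposition rather than substance: you \emph{set} $\Theta = \ell/(2\pi)$, but the lemma's $\Theta$ is the density of the cone at $p_0$, i.e.\ $\lim_{\rho\to 0} A_\rho^C/(\pi\rho^2)$; you should add the one line that this limit equals $\ell/(2\pi)$ because $\cosh\rho - 1 \sim \rho^2/2$ as $\rho \to 0$, which is exactly how the paper identifies it.
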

\begin{proof}
For any $C^2$ curve $\sigma\subset \partial B_{\rho}^{\mathbb{H}^n}(p_0)$ one computes
$$
\mathbf{k}_{\sigma}=\mathbf{k}_\sigma^\top -\coth \rho  \nabla_{\mathbb{H}^n} r
$$
where $\mathbf{k}_\sigma^\top$ is the component of the curvature tangent to $\partial B_{\rho}^{\mathbb{H}^n}(p_0)$ and $r$ is the radial distance in $\mathbb{H}^n$ to $p_0$.  For $t\leq 0$ let
$$
\phi_t:\mathbb{H}^n\setminus \bar{B}_{-t}^{\mathbb{H}^n}(p_0) \to\mathbb{H}^n\setminus \set{p_0}
$$ 
be the flow of $\nabla_{\mathbb{H}^n} r$.  By definition, this flow preserves cones based at $p_0$.  Moreover, one has
$$
\phi_{-t}(\gamma)=C(\gamma, p_0)\cap B_{R-t}^{\mathbb{H}^n}(p_0).
$$
It follows from the first variation formula that 
$$
\frac{d}{dt} Vol_{\mathbb{H}^n}(\phi_{-t}(\gamma))=-\coth (R-t)  Vol_{\mathbb{H}^n}(\phi_{-t}(\gamma))
$$
Solving this ODE gives for $t\in [0, R)$
$$
 Vol_{\Real^n}(C(\gamma; p_0)\cap B_{R-t}^{\mathbb{H}^n}(p_0)) =Vol_{\mathbb{H}^n}(\phi_{-t}(\gamma))=\sinh(R-t) Vol_{\mathbb{H}^n} (\gamma).
 $$
 That is, for $\rho\in (0,R]$
 $$
L_\rho^C=Vol_{\Real^n}(C(\gamma; p_0)\cap B_{\rho}^{\mathbb{H}^n}(p_0)) = Vol_{\mathbb{H}^n} (\gamma) \sinh \rho.
$$ 
As $\nabla_{\mathbb{H}^n} r$ has unit length away from $p_0$ and is tangent to $C(\gamma; p_0)$ it follows from the co-area formula that for $\rho\in (0, R]$
\begin{align*}
A_\rho^C=Vol_{\mathbb{H}^n}(C(\gamma; p_0)\cap \bar{B}_{\rho}^{\mathbb{H}^n}(p_0)) &=\int_0^\rho Vol_{\mathbb{H}^n}(C(\gamma; p_0)\cap B_{t}^{\mathbb{H}^n}(p_0)) dt\\
&=\int_0^\rho Vol_{\mathbb{H}^n} (\gamma)  \sinh t dt\\
&=(\cosh \rho-1) Vol_{\mathbb{H}^n} (\gamma) \\
&=\frac{\cosh \rho-1}{\sinh \rho} L_\rho^C.
\end{align*}
Since,
$$
\lim_{\rho\to 0}\frac{\cosh \rho-1}{ \rho^2}=\frac{1}{2}
$$
it follows that
$$
\Theta=  \lim_{\rho\to 0}\frac{A_\rho^C}{ \pi \rho^2}=\frac{Vol_{\mathbb{H}^n} (\gamma)}{2\pi}=\frac{L_\rho^C}{2\pi \sinh \rho}=\frac{A_\rho^C}{2\pi(\cosh \rho-1)}. 
$$
This proves the first claim.
To see the second we observe that
$$
(L_\rho^C)^2=(Vol_{\mathbb{H}^n} (\gamma))^2 \sinh^2 \rho 
$$
while
\begin{align*}
4\pi\Theta A_\rho^C+(A_\rho^C)^2&= 2(Vol_{\mathbb{H}^n} (\gamma))^2 (\cosh \rho-1)+ (Vol_{\mathbb{H}^n} (\gamma))^2 (\cosh \rho-1)^2\\
&= (Vol_{\mathbb{H}^n} (\gamma))^2(\cosh^2 \rho-1)= (Vol_{\mathbb{H}^n} (\gamma))^2\sinh^2 \rho.
\end{align*}
This proves the second claim.
\end{proof} 

\begin{prop}\label{AreaLengthEst}
Suppose $\Sigma$ is a compact minimal surface in $\mathbb{H}^n$ with $\partial \Sigma \subset \partial B_{R}^{\mathbb{H}^n}(p_0)$ a $C^1$ curve. 
If
$$
L_R= Vol_{\mathbb{H}^n}(\partial \Sigma)=Vol_{\mathbb{H}^n}(\Sigma\cap \partial B_{R}^{\mathbb{H}^n}(p_0)) 
$$
and
$$
A_R=Vol_{\mathbb{H}^n}(\Sigma\cap \bar{B}_{R}^{\mathbb{H}^n}(p_0)),
$$
then
$$
L_R^2\geq \frac{2L_R}{\sinh R} A_R +A_R^2.
$$
\end{prop}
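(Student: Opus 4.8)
The plan is to reduce the claimed quadratic inequality to the single linear estimate
\[
A_R \le \tanh(R/2)\, L_R
\]
and then to prove this estimate by integrating a carefully chosen radial vector field over $\Sigma$. For the reduction, let $C=C(\partial\Sigma;p_0)$ be the cone over the boundary curve. Applying Lemma~\ref{ConeComputationLem} with $\gamma=\partial\Sigma$ and $\rho=R$, its slice length and area satisfy $L_R^C=L_R$ and $A_R^C=\frac{\cosh R-1}{\sinh R}L_R=\tanh(R/2)L_R$, together with the exact identity $L_R^2=\frac{2L_R}{\sinh R}A_R^C+(A_R^C)^2$. Since $a\mapsto\frac{2L_R}{\sinh R}a+a^2$ is strictly increasing for $a\ge 0$, the desired inequality $L_R^2\ge\frac{2L_R}{\sinh R}A_R+A_R^2$ is equivalent to $A_R\le A_R^C$, i.e.\ to the linear estimate above. (Equivalently, one checks directly that $x^2+\frac{2x}{\sinh R}-1\le 0$ precisely when $x=A_R/L_R\in[0,\tanh(R/2)]$.)

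To prove $A_R\le\tanh(R/2)L_R$, I would set $r=\dist_{\mathbb{H}^n}(\cdot,p_0)$ and consider the vector field $X=\tanh(r/2)\,\nabla_{\mathbb{H}^n}r$ on $\mathbb{H}^n\setminus\{p_0\}$. Using the hyperbolic Hessian identity $\Hess_{\mathbb{H}^n}r=\coth r\,(g_{\mathbb{H}^n}-dr\otimes dr)$ and the minimality of $\Sigma$ (so that the mean-curvature term $\langle\nabla r,\mathbf{H}\rangle$ drops out), the tangential divergence along $\Sigma$ of $X=\phi(r)\nabla r$ is $\phi'\,|\nabla_\Sigma r|^2+\phi\,\coth r\,(2-|\nabla_\Sigma r|^2)$. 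The point of the choice $\phi(r)=\tanh(r/2)$ is that $\phi'+\phi\coth r=1$, and after the elementary trigonometric simplification one obtains the clean formula
\[
\operatorname{div}_\Sigma X = 1+\tanh^2(r/2)\,\bigl(1-|\nabla_\Sigma r|^2\bigr)\ \ge\ 1,
\]
the inequality holding because $|\nabla_\Sigma r|\le 1$ for the tangential projection of the unit field $\nabla r$. This $X$ is the hyperbolic analogue of the Euclidean position field $\mathbf{x}$, for which $\operatorname{div}_\Sigma\mathbf{x}=2$; the nonconstancy of $\operatorname{div}_\Sigma X$ reflects that no radial field in $\mathbb{H}^n$ has constant tangential divergence.

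Finally, I would integrate over the compact surface $\Sigma$, whose boundary is $\partial\Sigma=\Sigma\cap\partial B^{\mathbb{H}^n}_R(p_0)$ (if $p_0\in\Sigma$ then $X$ extends by $\mathbf{0}$ across $p_0$ and contributes nothing, so the divergence theorem still applies). This gives $A_R\le\int_\Sigma\operatorname{div}_\Sigma X\,dVol=\int_{\partial\Sigma}\langle X,\nu\rangle$, where $\nu$ is the outward unit conormal. Along $\partial\Sigma$ one has $r\equiv R$ and $\nabla_\Sigma r=|\nabla_\Sigma r|\,\nu$, so $\langle X,\nu\rangle=\tanh(R/2)\,|\nabla_\Sigma r|\le\tanh(R/2)$; integrating yields $A_R\le\tanh(R/2)L_R$, which completes the argument. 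I expect the main obstacle to be locating the correct weight $\phi(r)=\tanh(r/2)$ — i.e.\ recognizing $X$ as the right hyperbolic replacement for the position vector field — since the naive choice $\phi=\sinh r$ (for which $\operatorname{div}_\Sigma X=2\cosh r$) yields only the strictly weaker bound $A_R\le\frac{\sinh R}{2}L_R$; the remaining trigonometric simplification and the boundary computation are routine.
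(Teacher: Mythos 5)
Your proposal is correct and is essentially the paper's own argument: your vector field $\tanh(r/2)\,\nabla_{\mathbb{H}^n} r$ is exactly the field $\mathbf{X}=\frac{\cosh r-1}{\sinh r}\nabla_{\mathbb{H}^n} r$ used in Lemma \ref{AreaCompLem} (the Choe--Gulliver cone comparison), and your linear estimate $A_R\le \tanh(R/2)\,L_R$ is precisely the statement $A_R\le A_R^C$ that the paper obtains from Lemma \ref{AreaCompLem} and then combines with the cone identities of Lemma \ref{ConeComputationLem}, just as you do in your reduction step. The only cosmetic difference is that you bound the boundary flux pointwise by $|\mathbf{X}|=\tanh(R/2)$ instead of converting it, via the divergence theorem on the cone, into the cone's area.
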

\begin{proof}
 The key fact, proved in \cite[Proposition 2]{Choe1992} is the fact that any minimal surface in $\mathbb{H}^n$ has less area than an appropriate cone competitor.  That is,  if $\Sigma\subset \mathbb{H}^n$ is a compact minimal $p_0\in \mathbb{H}^n \setminus \partial \Sigma$, then
 $$
Vol_{\mathbb{H}^n}(C(\partial \Sigma; p_0))\geq  Vol_{\mathbb{H}^n}(\Sigma).
 $$
 For the sake of completeness we provide a proof this fact in Lemma \ref{AreaCompLem}.
 
 Combining this estimate with Lemma \ref{ConeComputationLem} for $\Sigma$ and $p_0$ as in the hypothesis gives
 $$
 L_R^2 = (L_R^C)^2 =\frac{2L_R^C A_R^C}{\sinh R} +(A_R^C)^2\geq \frac{2 L_R^C A_R}{\sinh R} + A_R^2 = \frac{2 L_R A_R}{\sinh R} +A_R^2.
 $$
 This completes the proof.
\end{proof}

We can now prove Theorem \ref{MainThm}:
\begin{proof}
Set
$$
L_R= Vol_{\mathbb{H}^n}(\Sigma\cap \partial B_{R}^{\mathbb{H}^n}(p_0)) 
$$
and
$$
A_R=Vol_{\mathbb{H}^n}(\Sigma\cap \bar{B}_{R}^{\mathbb{H}^n}(p_0))
$$
 As $\Sigma$ has $C^2$-regular asymptotic boundary, the expansions of Proposition \ref{LengthAreaAsympProp} imply
\begin{align*}
	L_R^2 &=L_\infty^2 \sinh^2 R-K_\infty +o(1), R\to \infty,
\end{align*}
\begin{align*}
	\frac{2L_R A_R}{\sinh R}& =2L_\infty^2 \cosh R+2 L_\infty A_\infty+o(1), R\to \infty
\end{align*}
and 
\begin{align*}
	A_R^2&=L_\infty^2 \cosh^2 R+2L_\infty A_\infty \cosh R+A_\infty^2 +o(1), R\to \infty.
\end{align*}

%
%
Plugging these expansions into the inequality given by Proposition \ref{AreaLengthEst} yields
\begin{align*}
L_\infty^2 \sinh^2 R &\geq 2L_\infty^2\cosh R +L_\infty^2\cosh^2 R\\
&+2L_\infty A_\infty \cosh R +O(1), R\to \infty.
\end{align*}
Rearranging this inequality and using $\cosh^2 R=\sinh^2 R+1$ gives
$$
-2L_\infty^2 \cosh R\geq 2L_\infty A_\infty \cosh R +O(1), R\to \infty.
$$

As $Vol_{\partial_\infty \mathbb{H}^n}(\partial_\infty \Sigma, p_0)=L_\infty>0$ and $A_\infty =\mathcal{A}(\Sigma)$ this yields
$$
-Vol_{\partial_\infty \mathbb{H}^n}(\partial_\infty \Sigma, p_0)\geq   \mathcal{A}(\Sigma) +O(e^{-R}), R\to \infty
$$
Taking the limit as $R\to \infty$ implies
$$
-Vol_{\partial_\infty \mathbb{H}^n}(\partial_\infty \Sigma, p_0)\geq  \mathcal{A}(\Sigma).
$$
Hence, by taking the suprememum over all $p_0\in \mathbb{H}^n$, one obtains 
$$
-\lambda_{c}[\partial_\infty \Sigma]\geq  \mathcal{A}(\Sigma).
$$
As, Lemma \ref{BoundaryConfVolRigidityLem} implies $\lambda_c[\partial_\infty \Sigma]\geq 2\pi$ all the claimed estimates hold.  The first rigidity result follows from the fact that if one has equality throughout, then $\lambda_c[\Sigma]=2\pi$ and so by Lemma \ref{BoundaryConfVolRigidityLem} implies $\Sigma$ is a totally geodesics copy of $\mathbb{H}^2$.

To complete the proof, we suppose $\lambda_c[\partial_\infty \Sigma]=-\mathcal{A}(\Sigma)$.
In this case, by Lemma \ref{BoundaryConfVolRigidityLem} either $\lambda_c[\partial_\infty\Sigma]=2\pi$ and $\Sigma$ is a totally geodesic $\mathbb{H}^2$ or $\lambda_c[\partial_\infty\Sigma]>2\pi$ and there is a $p_0\in \mathbb{H}^n$ so
$$
\lambda_c[\partial_\infty \Sigma]=Vol_{\partial \infty \mathbb{H}^n}(\partial_\infty \Sigma; p_0). 
$$
By Proposition \ref{LengthAreaAsympProp}, this means
$$
\lambda_c[\partial_\infty \Sigma]=Vol_{\partial \infty \mathbb{H}^n}(\partial_\infty \Sigma; p_0)=\lim_{R\to \infty} Vol_{\mathbb{H}^n} (\Sigma \cap \partial B_{R}^{\mathbb{H}^n}(p_0))=\lim_{R\to \infty} L_R=L_\infty.
$$
For this $p_0$, the rigidity hypothesis gives
$$
L_\infty=\lambda_c[\partial_\infty \Sigma] = -\mathcal{A}(\Sigma)=-A_\infty.
$$
Hence we may rewrite two of the expansions above as
\begin{align*}
\frac{2L_R A_R}{\sinh R}&=2A_\infty^2\cosh R-2 A_\infty^2+o(1), R\to \infty
\end{align*}
and, using $\cosh^2 R=\sinh^2 R+1$, 
\begin{align*}
A_R^2&=L_\infty^2 \sinh^2 R- 2A_\infty^2\cosh R+2A_\infty^2+o(1), R\to \infty  
\end{align*}
Combining these with the estimate of Proposition \ref{AreaLengthEst} and canceling terms gives
$$
-K_\infty\geq o(1), R\to \infty.
$$
Sending $R\to \infty$ gives,
$$
0\geq K_\infty= \int_{\partial \Sigma'} |\mathbf{k}_{\partial \Sigma'}^{\mathbb{S}^{n-1}}|^2 dVol_{\partial \Sigma'} \geq 0.
$$
Hence,
$$
 \int_{\partial \Sigma'} |\mathbf{k}_{\partial \Sigma'}^{\mathbb{S}^{n-1}}|^2 dVol_{\partial \Sigma'}=0
 $$
and so $\partial \Sigma'\subset\partial \mathbb{B}^n$ is a closed geodesic in $\mathbb{S}^{n-1}=\partial \mathbb{B}^n$. As $\partial \Sigma'$ has multiplicity one, this means
$$
2\pi =Vol_{\Real^n}(\partial \Sigma')=Vol_{\partial_\infty \mathbb{H}^n}(\partial_\infty \Sigma, p_0)=\lambda_{c}[\partial_\infty \Sigma]=-\mathcal{A}(\Sigma).
$$
This returns us to the previous rigidity situation and so $\Sigma$ must be a totally geodesic $\mathbb{H}^2$.
\end{proof}
\appendix
\section{Area properties of minimal surfaces in $\mathbb{H}^n$}
For the sake of completeness we include proofs of two facts we use in this paper about the area of minimal surfaces in $\mathbb{H}^n$.  The first is an area comparison result from \cite{Choe1992}:
\begin{lem}\label{AreaCompLem}
Let $\Sigma$ be a compact minimal surface with boundary in $\mathbb{H}^n$.  For any $p_0\not\in \partial \Sigma$ one has
$$
Vol_{\mathbb{H}^n} (\Sigma)\leq Vol_{\mathbb{H}^n}(C(\partial \Sigma; p_0)).
$$
\end{lem}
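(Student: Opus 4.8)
The plan is to prove the comparison by a calibration-type argument built from the radial field
$X=\tanh(r/2)\,\nabla_{\mathbb{H}^n} r$, where $r=\dist_{\mathbb{H}^n}(\cdot,p_0)$, together with the first variation (divergence) formula on $\Sigma$. The field $X$ is chosen precisely so that its tangential divergence equals $1$ along any radial cone, while on a general minimal surface it is at least $1$; the cone $C(\partial\Sigma;p_0)$ then emerges naturally as the equality case. The reason $\tanh(r/2)$ is the correct radial weight is that $\tanh(r/2)=\frac{\cosh r-1}{\sinh r}$ is exactly the primitive making the cone divergence constant, mirroring the role of $\tfrac12|\mathbf{x}|^2$ in the Euclidean cone inequality of Choe--Gulliver.

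First I would record the hyperbolic Hessian comparison $\Hess_{\mathbb{H}^n} r=\coth r\,(g_{\mathbb{H}^n}-dr\otimes dr)$ and combine it with minimality ($\mathbf{H}=\mathbf{0}$) to compute, for any radial $f=f(r)$ and an orthonormal frame of $T\Sigma$, the tangential divergence $\func{div}_\Sigma(f\,\nabla r)=f'\,|\nabla^\Sigma r|^2+f\coth r\,(2-|\nabla^\Sigma r|^2)$, where $\nabla^\Sigma r$ is the tangential part of $\nabla r$. Substituting $f=\tanh(r/2)$ and using $f'=\frac{1}{\cosh r+1}$ and $f\coth r=\frac{\cosh r}{\cosh r+1}$ gives, with $\tau=|\nabla^\Sigma r|^2\in[0,1]$,
$$
\func{div}_\Sigma X=\frac{2\cosh r-\tau(\cosh r-1)}{\cosh r+1}.
$$
This is $\geq 1$ since the inequality rearranges to $(\cosh r-1)(1-\tau)\geq 0$, which always holds, with equality exactly when $\nabla r$ is tangent to $\Sigma$ (the radial/cone situation) or $r=0$.

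Next I would apply the divergence theorem for the ambient field $X$ on $\Sigma$. Since $X=\mathbf{0}$ at $p_0$ and $|X|=O(r)$ there, any contribution from $p_0$ (in case $p_0\in\Sigma$) vanishes after excising a small geodesic ball and letting its radius tend to zero. Using $\mathbf{H}=\mathbf{0}$ this yields $Vol_{\mathbb{H}^n}(\Sigma)\leq\int_\Sigma \func{div}_\Sigma X=\int_{\partial\Sigma}\langle X,\nu_\Sigma\rangle$, with $\nu_\Sigma$ the outward conormal of $\partial\Sigma$ in $\Sigma$. It remains to bound this flux by $Vol_{\mathbb{H}^n}(C(\partial\Sigma;p_0))$. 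Parametrizing $\partial\Sigma$ by arclength and writing $\partial_s r$ for the radial derivative along it, a direct computation in geodesic polar coordinates about $p_0$ (equivalently, because the same field calibrates the cone, where $\tau\equiv 1$ forces $\func{div}_C X\equiv 1$) gives $Vol_{\mathbb{H}^n}(C)=\int_{\partial\Sigma}\tanh(r/2)\sqrt{1-(\partial_s r)^2}\,ds$, the factor $\sqrt{1-(\partial_s r)^2}$ being $\langle\nabla r,\nu_C\rangle$ for the cone's conormal $\nu_C$ since the cone contains the radial geodesics. The decisive estimate is then $\langle\nabla r,\nu_\Sigma\rangle\leq\sqrt{1-(\partial_s r)^2}$: decomposing the \emph{unit} vector $\nabla r$ along the orthonormal system consisting of the unit tangent $\gamma'$ to $\partial\Sigma$, the conormal $\nu_\Sigma$, and the directions normal to $\Sigma$ gives $\langle\nabla r,\nu_\Sigma\rangle^2=1-(\partial_s r)^2-|(\nabla r)^\perp|^2$. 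As $\tanh(r/2)\geq 0$, this delivers $\int_{\partial\Sigma}\langle X,\nu_\Sigma\rangle\leq Vol_{\mathbb{H}^n}(C)$ and closes the chain.

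I expect the main obstacle to be this final flux comparison for a general, non-spherical boundary. The crude bound $\langle\nabla r,\nu_\Sigma\rangle\leq 1$ is too lossy — it overshoots the cone area rather than matching it — so one genuinely needs the refined Pythagorean splitting of $\nabla r$ above, which simultaneously produces the weight $\sqrt{1-(\partial_s r)^2}$ appearing in the cone's conormal and identifies the cone as the equality case (where additionally $|(\nabla r)^\perp|=0$ and $\tau=1$). By contrast, the divergence inequality $\func{div}_\Sigma X\geq 1$ is an elementary consequence of the Hessian comparison and minimality once the weight $\tanh(r/2)$ has been correctly guessed.
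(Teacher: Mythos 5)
Your proof is correct and takes essentially the same route as the paper: your field $X=\tanh(r/2)\,\nabla_{\mathbb{H}^n}r$ is exactly the paper's $\mathbf{X}=\frac{\cosh r-1}{\sinh r}\nabla_{\mathbb{H}^n}r$, and both arguments combine $\mathrm{div}_\Sigma \mathbf{X}\geq 1$ from the Hessian comparison, the divergence theorem on the minimal surface, the pointwise flux comparison along $\partial\Sigma$ (your Pythagorean splitting is precisely the paper's $g_{\mathbb{H}^n}(\nu,\mathbf{X})\leq g_{\mathbb{H}^n}(\eta,\mathbf{X})$), and the divergence theorem on the cone, where $\mathrm{div}_C\mathbf{X}\equiv 1$. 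In fact your computations are slightly more careful than the printed ones: the paper's displayed divergence formula should read $1+\frac{(\cosh r-1)^2}{\sinh^2 r}\bigl(1-|\nabla_\Gamma r|^2\bigr)$, matching your expression, and its assertion $g_{\mathbb{H}^n}(\eta,\mathbf{X})=1$ should be the identity $g_{\mathbb{H}^n}(\eta,\mathbf{X})=\tanh(r/2)\sqrt{1-(\partial_s r)^2}$ whose boundary integral equals $Vol_{\mathbb{H}^n}(C(\partial\Sigma;p_0))$, exactly as you derive.
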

\begin{proof}
 For $p\neq p_0$ consider the vector field on $\mathbb{H}^n$
 $$
 \mathbf{X}(p)=\frac{\cosh r(p)-1}{\sinh r(p)} \nabla_{\mathbb{H}^n} r=\frac{\cosh r(p)-1}{\sinh^2 r(p)} \nabla_{\mathbb{H}^n} \cosh r
 $$
 where 
 $$
 r(p)=\dist_{\mathbb{H}^n}(p,p_0).
 $$
 The vector field $\mathbf{X}$ extends smoothly to $p=p_0$.  Using
 $$
 \nabla^2_{\mathbb{H}^n} \cosh r = \cosh r \;  g_{\mathbb{H}^n},
 $$
 we see that for any surface $\Gamma\subset \mathbb{H}^n$
 $$
 \mathrm{div}_{\Gamma} \mathbf{X}=1+\frac{(\cosh r(p)-1)^2+1}{\sinh^2 r(p)} (1-|\nabla_\Gamma r|^2)
 $$ 
 In particular, if $\Gamma$ is minimal, then
 $$
 \mathrm{div}_{\Gamma} \mathbf{X}^\top=\mathrm{div}_{\Gamma} \mathbf{X} \geq  1.
 $$
 Likewise, if $\Gamma$ is a cone over $p_0$, then $\nabla_{\mathbb{H}^n} r$ is tangent to $\Gamma$ and so $|\nabla_\Gamma r|=|\nabla_{\mathbb{H}^n} r|=1$.  Hence, as $\mathbf{X}$ is also tangent to $\Gamma$,
 $$
  \mathrm{div}_{\Gamma} \mathbf{X}^\top=g_{\mathbb{H}^n}(\mathbf{H}_\Gamma, \mathbf{X})+\mathrm{div}_{\Gamma} \mathbf{X}=1.
 $$
 
If $\nu$ is the outward normal to $\Sigma$ along $\partial \Sigma$ and $\eta$ is the outward normal to $C(\partial \Sigma; p_0)$, then, along $\partial \Sigma$,
 $$
 g_{\mathbb{H}^n}(\nu, \mathbf{X})\leq g_{\mathbb{H}^n}(\eta, \mathbf{X})=1.
 $$
 Hence, the first variation formula gives
 \begin{align*}
 Vol_{\mathbb{H}^n}(\Sigma)& = \int_{\Sigma} 1 dVol_{\Sigma}\leq \int_{\Sigma} \mathrm{div}_{\Sigma} (\mathbf{X}) d{Vol}_{\Sigma}\\
                           &=\int_{\partial \Sigma}  g_{\mathbb{H}^n}(\nu, \mathbf{X})
                           dVol_{\partial \Sigma}\leq \int_{\partial \Sigma}  g_{\mathbb{H}^n}(\eta, \mathbf{X})dVol_{\partial \Sigma}\\
                           &=\int_{\partial \Sigma}\mathrm{div}_{C(\partial \Sigma,p_0)} (\mathbf{X}) dVol_{C(\partial \Sigma,p_0)}=\int_{\partial \Sigma}1 dVol_{C(\partial \Sigma,p_0)}\\
&=Vol_{\mathbb{H}^n}(C(\partial \Sigma; p_0)).
 \end{align*}
\end{proof}

We also show that the renormalized area is defined independent of point $p_0$ and also verify formula \eqref{AMFormulaEqn} from \cite{Alexakis2010}:
\begin{lem}\label{AMFormualLem}
Let $\Sigma $ be  a two-dimensional minimal surface in $\mathbb{H}^n$, $n\geq 3$, with a $C^2$-regular asymptotic boundary.  For any $p_0\in \mathbb{H}^n$,	one has
$$
\mathcal{A}(\Sigma)=\lim_{R\to \infty}\left( Vol_{\mathbb{H}^n}(\Sigma\cap \bar{B}_R^{\mathbb{H}^n}(p_0))- Vol_{\mathbb{H}^n}(\Sigma\cap \partial {B}_R^{\mathbb{H}^n}(p_0))\right)
$$
is independent of $p_0$.  Moreover, 
$$
\mathcal{A}(\Sigma)=-2\pi \chi(\Sigma)-\frac{1}{2}\int_{\Sigma} |\mathbf{A}^{\mathbb{H}^n}_\Sigma|^2 dVol_{\Sigma}.
$$
\end{lem}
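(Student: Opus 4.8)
The plan is to derive both assertions from the Gauss--Bonnet theorem applied to the truncations $\Sigma\cap\bar B_R^{\mathbb H^n}(p_0)$, extracting the finite part with the help of the precise expansions in Proposition \ref{LengthAreaAsympProp}.

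First, the existence of the limit and its value for a \emph{fixed} $p_0$ is immediate from Proposition \ref{LengthAreaAsympProp}: writing $A_R,L_R$ for the area and boundary length and using $\cosh R-\sinh R=e^{-R}$, one finds
$$
A_R-L_R=\bigl(L_\infty\cosh R+A_\infty\bigr)-\bigl(L_\infty\sinh R-K_\infty e^{-R}\bigr)+o(e^{-R})=A_\infty+o(1),
$$
so the limit exists and equals the area-expansion constant $A_\infty$. It therefore remains to identify $A_\infty$ with the manifestly $p_0$-independent right-hand side of the claimed formula; once this identity is established, independence of $p_0$ follows at once, since $\chi(\Sigma)$ and $\int_\Sigma|\mathbf A^{\mathbb H^n}_\Sigma|^2$ do not refer to $p_0$.

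For the formula itself I would first invoke the Gauss equation: since $\Sigma$ is minimal and $\mathbb H^n$ has constant curvature $-1$, the intrinsic Gauss curvature satisfies $K_\Sigma=-1-\tfrac12|\mathbf A^{\mathbb H^n}_\Sigma|^2$. Choosing, by Sard's theorem, radii $R$ for which $\Sigma_R=\Sigma\cap\partial B_R^{\mathbb H^n}(p_0)$ is a smooth $1$-manifold and $\chi(\Sigma\cap\bar B_R^{\mathbb H^n}(p_0))=\chi(\Sigma)$, the Gauss--Bonnet theorem gives
$$
-A_R-\tfrac12\int_{\Sigma\cap\bar B_R^{\mathbb H^n}(p_0)}|\mathbf A^{\mathbb H^n}_\Sigma|^2\,dVol_\Sigma+\int_{\Sigma_R}k_g\,ds=2\pi\chi(\Sigma),
$$
where $k_g$ is the geodesic curvature of $\Sigma_R$ inside $\Sigma$. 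Rearranged, this reads $\int_{\Sigma_R}k_g\,ds-A_R=2\pi\chi(\Sigma)+\tfrac12\int_{\Sigma\cap\bar B_R^{\mathbb H^n}(p_0)}|\mathbf A^{\mathbb H^n}_\Sigma|^2\,dVol_\Sigma$.

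The heart of the argument---and the step I expect to be the main obstacle---is the asymptotic evaluation $\int_{\Sigma_R}k_g\,ds=L_\infty\cosh R+o(1)$ as $R\to\infty$. To obtain this I would decompose the curvature vector of $\Sigma_R$ in $\mathbb H^n$ relative to the geodesic sphere $\partial B_R^{\mathbb H^n}(p_0)$ exactly as in Lemma \ref{ConeComputationLem}, writing $\mathbf k_{\Sigma_R}=\mathbf k^{\top}-\coth R\,\nabla_{\mathbb H^n}r$, so that with $\nu=\nabla_\Sigma r/|\nabla_\Sigma r|$ the outward conormal one gets $k_g=\coth R\,|\nabla_\Sigma r|-\langle\mathbf k^{\top},\nu\rangle$. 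The leading term integrates, via $\int_{\Sigma_R}|\nabla_\Sigma r|\,ds=L_\infty\sinh R+O(e^{-R})$ (a consequence of \eqref{Claim5Eqn} together with the expansion of $L_R$) and $\coth R\,\sinh R=\cosh R$, to $L_\infty\cosh R+o(1)$. The remaining term is handled by Cauchy--Schwarz: since $\mathbf k^{\top}\perp\nabla_{\mathbb H^n}r$ one has $|\langle\mathbf k^{\top},\nu\rangle|\le|\mathbf k^{\top}|\sqrt{1-|\nabla_\Sigma r|^2}$, and the $C^2$-regularity of the compactified surface forces both $\int_{\Sigma_R}|\mathbf k^{\top}|^2\,ds$ and $\int_{\Sigma_R}(1-|\nabla_\Sigma r|^2)\,ds$ to decay like $e^{-R}$, so this contribution is $o(1)$; the delicate point is precisely the exponential decay of the within-sphere curvature of the cross-sections, which must be extracted from Proposition \ref{ComputationLem} and the conformal scaling between $g_E$ and $g_P$. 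Granting the key asymptotic, letting $R\to\infty$ in the rearranged Gauss--Bonnet identity shows the total curvature is finite and gives $-A_\infty=2\pi\chi(\Sigma)+\tfrac12\int_\Sigma|\mathbf A^{\mathbb H^n}_\Sigma|^2\,dVol_\Sigma$, which is the desired formula and simultaneously establishes independence of $p_0$.
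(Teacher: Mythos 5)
Your proposal is correct and follows the same overall architecture as the paper's proof: the Gauss equation $K_\Sigma=-1-\tfrac12|\mathbf{A}_\Sigma|^2$ for minimal $\Sigma$, Gauss--Bonnet on the truncations, and an asymptotic evaluation of the boundary geodesic-curvature integral, combined with the expansions of Proposition \ref{LengthAreaAsympProp}. The only real divergence is in how $\int_{\Sigma_R}k_g$ is evaluated. The paper stays in the compactified picture: it applies the conformal-change formula to the curvature vector of $\Sigma'_s=\Sigma'\cap\partial B_s$ and uses \eqref{Claim2Eqn} and \eqref{Limitxperpeqn} to obtain the \emph{pointwise} estimate $\kappa_{\Sigma_R}=1+o(e^{-R})$, whence $\int_{\Sigma_R}\kappa_{\Sigma_R}=L_R+o(1)=L_\infty\sinh R+o(1)$. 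You instead work intrinsically in $\mathbb{H}^n$ via the geodesic-sphere decomposition $\mathbf{k}_{\Sigma_R}=\mathbf{k}^\top-\coth R\,\nabla_{\mathbb{H}^n}r$ and control the error integrally by Cauchy--Schwarz; your answer $L_\infty\cosh R+o(1)$ agrees with the paper's up to $L_\infty e^{-R}$. Two remarks. First, the ``delicate point'' you flag is softer than you fear: for the Cauchy--Schwarz product to be $o(1)$ you only need $\int_{\Sigma_R}|\mathbf{k}^\top|^2=o(e^{R})$, which already follows from the uniform pointwise bound $|\mathbf{k}^\top|_{g_P}=\tfrac{1}{2}(1-|\mathbf{x}|^2)\,|\mathbf{k}^\top|_{g_E}=o(1)$ (the sphere-tangential component transforms by pure scaling because the conformal factor is radial, and $|\mathbf{k}^\top|_{g_E}$ is bounded by $C^2$-regularity); meanwhile $\int_{\Sigma_R}\bigl(1-|\nabla_\Sigma r|^2\bigr)=O(e^{-R})$ drops out of comparing \eqref{Claim4Eqn} with \eqref{Claim5Eqn}, which give $\int_{\Sigma_R}\bigl(|\nabla_\Sigma r|^{-1}-1\bigr)=K_\infty e^{-R}+o(e^{-R})$. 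Second, your use of Proposition \ref{LengthAreaAsympProp} is non-circular, since you invoke only the existence of the expansion constants and not the identification $A_\infty=\mathcal{A}(\Sigma)$ -- exactly the caveat the paper records at the end of its own proof. (The appeal to Sard is unnecessary: by \eqref{Claim2Eqn}, $r$ has no critical points on $\Sigma$ near infinity, so $\Sigma_R$ is a smooth curve for all large $R$.)
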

\begin{proof}
	Pick an identification $i:\mathbb{H}^{n}\to \mathbb{B}^{n}$ so that $i(p_0)=\mathbf{0}$.  As $i^{*}g_P=g_{\mathbb{H}^{n}}$, one has
$i(\partial B_R^{\mathbb{H}^{n}}(p_0))=\partial B_{R}^{g_P}(\mathbf{0})$. Set $\Sigma_R=\Sigma\cap \partial B_{R}^{\mathbb{H}^{n}}(p_0)$. 
and let $\Sigma'=\overline{i(\Sigma)}$ be the natural compactification of $\Sigma$ relative to $p_0$.  Clearly,   $i(\Sigma_R)=\Sigma'\cap \partial B_s(0)=\Sigma'_s$.  

Let $\mathbf{k}_{\Sigma_R}$ be the mean curvature vector of $\Sigma_R$ in $\mathbb{H}^n$.  Likewise, let $\mathbf{k}_{\Sigma_s'}^{g_P}$ be the mean curvature vector of $\Sigma'_s$ with respect to $g_P$ and  $\mathbf{k}_{\Sigma_s'}^{g_E}$ be the mean curvature vector of $\Sigma'_s$ with respect to $g_E$.  The formula for the conformal change of mean curvature vector ensures
$$
\mathbf{k}_{\Sigma_s'}^{g_P}=\frac{(1-|\mathbf{x}|^2)^2}{4}\left( \mathbf{k}_{\Sigma_s'}^{g_E}-\frac{2\mathbf{x}}{1-|\mathbf{x}|^2}\right).
$$
Let $\rho(p)=\dist_{g_P}(p,\mathbf{0})$ and set
$$
\mathbf{N}(p)=\frac{\nabla_{g_P} \rho(p)}{|\nabla_{g_P} \rho|_{g_P}(p)}=\frac{1-|\mathbf{x}(p)|^2}{2|\mathbf{x}^\top(p)|} \mathbf{x}^\top(p).
$$
For $s$ near $1$ this is the outward unit normal to $\Sigma_s'$ in $\Sigma_s$.
On $\Sigma_s'$ one computes,
\begin{align*}
	g_P\left(\mathbf{N},\mathbf{k}_{\Sigma_s'}^{g_P} \right) &= -|\mathbf{x}^\top|+\frac{1-|\mathbf{x}|^2}{2|\mathbf{x}^\top|} g_E(\mathbf{k}_{\Sigma_s'}^{g_E}, \mathbf{x}^\top).
\end{align*}
Observe that for $p\in \partial \Sigma'$ it follows from \eqref{Limitxperpeqn} that 
$$
\lim_{\substack{q\in \Sigma'\setminus \partial \Sigma'\\ q\to p}} \frac{|\mathbf{x}(q)|-|\mathbf{x}^\top(q)|}{1-|\mathbf{x}(q)|}=\lim_{\substack{q\in \Sigma'\setminus \partial \Sigma'\\ q\to p}} \frac{|\mathbf{x}^\perp(q)|^2}{(|\mathbf{x}(q)|+|\mathbf{x}^\top(q)|) (1-|\mathbf{x}(q)|)}=0.
$$
Hence, on $\Sigma_s'$,
$$
|\mathbf{x}^\top|=s+o(1-s), s\to 1.
$$
Likewise, using \eqref{Claim2Eqn} of Proposition \ref{ComputationLem} and fact that $\Sigma'$ is $C^2$
$$
\lim_{s\to 1} g_E(\mathbf{k}_{\Sigma_s'}^{g_E}, \mathbf{x}^\top)=g_E(\mathbf{k}_{\partial \Sigma'}^{g_E}, \mathbf{x})=-1
$$
and so on $\Sigma'_s$
$$
\frac{1-|\mathbf{x}|^2}{2|\mathbf{x}^\top|} g_E(\mathbf{k}_{\Sigma_s'}^{g_E}, \mathbf{x}^\top)=s-1+o(s-1), s\to 1.
$$
That is,  on $\Sigma'_s$,
$$
	g_P\left(\mathbf{N},\mathbf{k}_{\Sigma_s'}^{g_P} \right) =-1+o(s-1), s\to 1
$$
We conclude that for $R$ large the geodesic curvature of $\Sigma_R$ in $\Sigma$ satisfies
$$
\kappa_{\Sigma_R}=-g_{\mathbb{H}^n}\left( \mathbf{k}_{\Sigma_R}, \frac{\nabla_{\Sigma} r}{|\nabla_\Sigma r|}\right)=1+o(e^{-R}), R\to \infty
$$
where here $r(p)=\dist_{\mathbb{H}^n}(p,p_0)$.  Hence,
$$
\int_{\Sigma_R} \kappa_{\Sigma_R} dVol_{\Sigma_R}=Vol_{\mathbb{H}^n} (\Sigma_R)+o(1), R\to \infty
$$
where we used the fact proved in see \cite[Lemma 4.1]{bernsteinColdingMinicozziEntropy2020} or the first part of Proposition \ref{LengthAreaAsympProp} that
$$
Vol_{\mathbb{H}^n} (\Sigma_R)=Vol_{\partial_\infty \mathbb{H}^n}(\partial_\infty \Sigma, p_0) \sinh R+o(e^R), R\to \infty.
$$

Finally, by the Gauss equations, if $K_\Sigma$ is the Gauss curvature, then, as $\Sigma$ is minimal
$$
K_{\Sigma}=-1-\frac{1}{2}|\mathbf{A}_{\Sigma}|^2
$$
Hence, by the Gauss-Bonnet formula, for $R$ large,
\begin{align*}
Vol_{\mathbb{H}^n}&(\Sigma\cap \bar{B}_R^{\mathbb{H}^n}(p_0) )=-\int_{\Sigma\cap \bar{B}_R^{\mathbb{H}^n}(p_0)} K_{\Sigma} +\frac{1}{2}|\mathbf{A}_{\Sigma}|^2 dVol_{\Sigma}\\
&=\int_{\Sigma_R} \kappa_{\Sigma_R} dVol_{\Sigma_R}-2\pi \chi(\Sigma\cap \bar{B}_R^{\mathbb{H}^n}(p_0)) -\frac{1}{2}\int_{\Sigma\cap \bar{B}_R^{\mathbb{H}^n}(p_0)}|\mathbf{A}_{\Sigma}|^2 dVol_{\Sigma}\\
&=Vol_{\mathbb{H}^n} (\Sigma_R) -2\pi \chi(\Sigma)-\frac{1}{2}\int_{\Sigma\cap \bar{B}_R^{\mathbb{H}^n}(p_0)}|\mathbf{A}_{\Sigma}|^2 dVol_{\Sigma}+o(1), R\to \infty.
\end{align*}
Here the last equality used that the definition of asymptotically regular boundary means the Euler characteristic of $\Sigma\cap \bar{B}_R^{\mathbb{H}^n}(p_0)$ stabilizes for large $R$. Both claims follow immediately from this by sending $R\to \infty$. Note that the existence of the limit, but not it's independence from $p_0$, follows from the first part of Proposition \ref{LengthAreaAsympProp} and the proof of this part does not use Lemma \ref{AMFormualLem}.  
\end{proof}
\section{Graham Witten expansion}\label{GrahawWittenApp}

To connect our expansion of \eqref{VolExpEqn} with that considered by Graham and Witten \cite{RobinGraham1999} introduce the function 
$$
s=2\frac{1-|\mathbf{x}|}{1+|\mathbf{x}|}
$$
on $\mathbb{B}^{n+k} \backslash\set{0}$. One verifies that $s$ is a boundary defining function and,
in appropriate associated  coordinates,  the Poincar\'{e} metric has the form
$$
g_P= s^{-2} \left( ds^2+ \left(1-\frac{s^2}{4}\right)^{2} g_{\mathbb{S}^{n+k-1}}\right).
$$
Hence, if $i$ is an identification of $\mathbb{H}^{n+k}$ with $\mathbb{B}^{n+k}$ sending $p_0$ to $0$ and $\sigma=s\circ i$, then $\sigma^2 g_{\mathbb{H}^{n+k}}$ is a conformal compactification in the sense of \cite{RobinGraham1999}.
Suppose $\Sigma\subset \mathbb{H}^{n+k}$ is a $n$-dimensional minimal submanifold that has $C^\infty$-regular asymptotic boundary. Using a boundary defining function like $\sigma$,  Graham and Witten \cite{RobinGraham1999} showed that, when $n$ is even,
\begin{align*}
Vol_{\mathbb{H}^{n+k}}(&\Sigma\cap B_{R(\epsilon)}^{\mathbb{H}^{n+k}}(p_0))=Vol_{\mathbb{H}^{n+k}}(\Sigma\cap \set{\sigma>\epsilon})\\
&=c_0 \epsilon^{-n+1}+c_2 \epsilon^{-n+3}+\cdots+c_{n-2} \epsilon^{-1} +c_{n-1} +o(1), \epsilon\to 0
\end{align*}
while if $n$ is odd, then
\begin{align*}
Vol_{\mathbb{H}^{n+k}}(&\Sigma\cap  B_{R(\epsilon)}^{\mathbb{H}^{n+k}}(p_0))=
Vol_{\mathbb{H}^{n+k}}(\Sigma\cap \set{\sigma>\epsilon})\\
&= c_0 \epsilon^{-n+1}+c_2 \epsilon^{-n+3}+\cdots+c_{n-3} \epsilon^{-2} +d \log \frac{1}{\epsilon} +c_{n-1} +o(1), \epsilon\to 0.
\end{align*}
Here $R(\epsilon)=-\ln\left(\frac{\epsilon}{2}\right)$.  When $n=2$, if one sets
$$
\epsilon(R)=2e^{-R}
$$
then the the expansion is equivalent to \eqref{VolExpEqn} as in this case
$$
\cosh R(\epsilon)=\epsilon^{-1}+o(1), \epsilon \to 0.
$$

Graham and Witten  further showed that, when $n$ is even, $c_{n-1}$ is independent of the choice of $\sigma$ (and so independent of choice of $p_0$)  and the same is true of $d$ when $n$ is odd.
When $n=2$ this quantity is precisely the renormalized area considered in \cite{Alexakis2010} completing the justification of \eqref{VolExpEqn}. This expansion is also the (Riemannian) analog of the entropy considered by Ryu and Takayanagi \cite{Ryu2006, Ryu2006a}. One observes that
$$
c_0=Vol_{\partial_{\infty}\mathbb{H}^{n+k}}(\partial_\infty \Sigma,p_0)
$$
so the conformal volume is related to the leading order behavior of this expansion.  Comparing with \cite{BWRelEnt} and \cite{BWTopUniq} it seems the renormalized area is analogous to the relative entropy.

\bibliographystyle{hamsabbrv}
\bibliography{Library}
\end{document}